\newtheorem{thm}{Theorem}[section]
\newtheorem{cor}[thm]{Corollary}
\newtheorem{lem}[thm]{Lemma}
\newtheorem{prop}[thm]{Proposition}
\newtheorem{ques}{\sc Question}
\newtheorem{eks}{\sc Example}
\theoremstyle{definition}
\newtheorem{defn}[thm]{Definition}
\theoremstyle{remark}
\newtheorem{rem}{Remark}[section]
\numberwithin{equation}{section}
\newcommand{\tn}{|\mspace{-1mu}|\mspace{-1mu}|}
\newcommand{\eps}{\varepsilon}
\newcommand{\xast}{x^\ast}
\newcommand{\Xast}{X^{\ast}}
\newcommand{\Xastast}{X^{\ast\ast}}
\DeclareMathOperator{\ext}{ext}
\DeclareMathOperator{\linspan}{span}
\DeclareMathOperator{\dens}{dens}
\begin{document}

\title[On thickness and thinness of Banach spaces]%
{On thickness and thinness of Banach spaces}%
\author[T.~A.~Abrahamsen]{Trond A. Abrahamsen}
\author[J.~Langemets]{Johann Langemets}
\author[V.~Lima]{Vegard Lima}
\author[O.~Nygaard]{Olav Nygaard}

\address{University of Tartu, J. Liivi 2,
50409 Tartu, Estonia.}
\email{johann.langemets@ut.ee}

\address{Aalesund University College, Postboks 1517,
N-6025 {\AA}lesund
Norway.}
\email{Vegard.Lima@gmail.com}

\address{Department of Mathematics, Agder University, Servicebox 422,
4604 Kristiansand, Norway.}
\email{Trond.A.Abrahamsen@uia.no}
\urladdr{http://home.uia.no/trondaa/index.php3}
\email{Olav.Nygaard@uia.no}
\urladdr{http://home.uia.no/olavn/}
\thanks{The research of J. Langemets was supported by Estonian Science Foundation Grant 8976,  Estonian Targeted Financing Project SF0180039s08 and Estonian Institutional Research Project IUT20-57.}
\subjclass[2010]{46B20; 46B03}%
\keywords{thinness, thickness, renorming, M-ideal, ai-ideal}%
%
\begin{abstract}
 The aim of this note is to complement and extend some recent results on
Whitley's indices of thinness and thickness in three main directions. Firstly, we investigate both the indices when forming $\ell_p$-sums of
Banach spaces, and obtain formulas which show that they behave rather
differently. Secondly, we consider the relation of the indices of the space
and a subspace.  Finally, every Banach space $X$ containing
a copy of $c_0$ can be equivalently renormed so that in the new norm $c_0$
is an M-ideal in $X$ and both the thickness and thinness index of $X$ equal 1.

\end{abstract}
\maketitle

\section{Introduction}

Let $X$ be a Banach space, $B_X$ its unit ball and $S_X$ its unit sphere. Also, denote by $B(x,r)$ the closed ball with center in $x$ and radius $r$. Whitley introduced in \cite{W} the \emph{index of thickness},
\[T_W(X) = \inf\left\{r >0: \exists (x_i)_{i=1}^n \subset S_X \:\mbox{with}\: S_X \subset \bigcup_{i=1}^n B(x_i, r)\right\},\]
and the \emph{index of thinness},
\[t(X) = \inf\left\{r > 0: \forall (x_i)_{i=1}^n \subset S_X, \eps>0,\exists x \in S_X \:\mbox{with}\: \max_i \| x_i -  x\| < r + \eps \right\}.\]

The subscript $W$ in $T_W(X)$ is to indicate that this is Whitley's original definition. As is easily observed, if $\dim X<\infty$, $T_W(X)=0$ and $t(X)=2$ while if $\dim X=\infty$, $T_W(X),t(X)\in[1,2]$. More difficult is the fact, proved by Whitley, that
\[T_W(\ell_p)=2^{1/p}=t(\ell_p), \hspace{1cm} 1\leq p<\infty.\] 
Together with Whitley's observations that $T_W(c_0)=1=T_W(\ell_\infty),t(c_0)=1$ and $t(\ell_\infty)=2$ it is clear that the whole range $[1,2]$ of indices is possible and that $(1,2)$ is covered by indices of reflexive spaces. We will see that, by choosing appropriate reflexive spaces $X$ and $Y$, we may have $T_W(X)=1$ and $t(Y)=2$, but never $T_W(X)=2$ nor $t(X)=1$.

Whitley \cite[Lemmas~3 and 8]{W} also showed that $t(L_\infty[0,1])=T_W(L_\infty[0,1])\\=2$.
Recently (see \cite{BJ} and \cite{CPS}) it was shown that $T_W(L_p[0,1])=2^{1/p}$ for
$1 \le p < \infty$. In \cite[Example~3.6]{BSP} it was shown that $t(L_1[0,1])=2$ and
in \cite[Theorem~6.3]{MP} that $t(L_p[0,1]) = 2^{1/p}$ for $p \ge 2$.
In fact, it is clear from the proof of \cite[Theorem~6.3]{MP} that
$t(L_p[0,1]) \le 2^{1/p}$ for all $1 \le p < \infty$. Rainis Haller (private communication) pointed out to us that for all $f \in S_{L_p}$ $\|f_i - f\|^p$ is almost $2$ when $f_i =
n^{1/p}\chi_{[i/n,(i+1)/n]}$.
This shows that the lower bound is also $2^{1/p}$, hence $t(L_p[0,1])=2^{1/p}$ for
all $1 \le p < \infty$.

Before proceeding, let us just mention that in \cite{CPS} it is noted that when $\dim X=\infty$ and $S_X\subset\bigcup_{i=1}^n B(x_i,r), (x_i)_{i=1}^n\subset S_X$, then $B_X\subset\bigcup_{i=1}^n B(x_i,r)$. Thus, for $\dim X=\infty$, the index
\[T(X) = \inf\left\{r >0: \exists (x_i)_{i=1}^n \subset S_X \:\mbox{with}\: B_X \subset \bigcup_{i=1}^n B(x_i, r)\right\}\]
equals $T_W(X)$. Note that when $\dim X<\infty$ we always have $T(X)=1$ (while $T_W(X)=0)$. In this note we are only interested in calculating the index for infinite-dimensional Banach spaces and will thus take the freedom to use $T(X)$ in what follows to denote also $T_W(X)$.

Most of what is known concerning $T(X)$ and $t(X)$ can be found by combining \cite{W}, \cite{BJ} and \cite{CPS} (note that the two latter overlap a bit on $T$-results). The particular case when $X$ is separable and $T(X)=2$ is thoroughly described in terms of the almost Daugavet property in \cite{KSW} and \cite{Lu}. For the non-separable case see  \cite{HL}. 

Yost introduced in \cite{Y} two indices
\[
\mu_1(X) = \sup_{\overset{x_1,\ldots,x_n \in S_X}{n \in \mathbb{N}}}
\inf_{x \in S_X} \frac{1}{n} \sum_{i=1}^n \|x_i - x\|
\]
and
\[
\mu_2(X) = \inf_{\overset{x_1,\ldots,x_n \in S_X}{n \in \mathbb{N}}}
\sup_{x \in S_X} \frac{1}{n} \sum_{i=1}^n \|x_i - x\|.
\]
He showed that we always have  $\mu_1(X)\leq \mu_2(X)$ for any Banach space $X$.

Note that we can give similar formulations to the thinness and thickness index 
\[
t(X) = \sup_{\overset{x_1,\ldots,x_n \in S_X}{n \in \mathbb{N}}}
\inf_{x \in S_X} \max_{1 \le i \le n} \|x_i - x \|
\]
and
\[
T(X) = \inf_{\overset{x_1,\ldots,x_n \in S_X}{n \in \mathbb{N}}}
\sup_{x \in S_X} \min_{1 \le i \le n} \|x_i - x \|.
\]

From these definitions we observe that $1\leq\mu_1(X)\leq t(X)\leq 2$ and $1\leq T(X)\leq \mu_2(X)\leq 2$ for any Banach space $X$. 

The following example by Papini (see \cite[Example 2]{P}) shows that we may have $\mu_1(X)< t(X)$ and $T(X)< \mu_2(X)$.

\begin{eks}
Let $K=\{c\}\cup [a,b]$ with $c\notin[a,b]$ and consider $X=C(K)$. Then we have that $\mu_1(X)=\mu_2(X)=3/2$ while $T(X)=1$ and $t(X)=2$.
\end{eks}

It turns out that $T(X)=2$ is equivalent to $\mu_2(X)=2$ (see \cite[Theorem~2.1]{P}). It is clear that if $t(X)=1$ then  $\mu_1(X)=1$, but the reverse implication seems to be unknown.
\begin{ques}
Does $\mu_1(X)=1$ imply $t(X)=1$?
\end{ques}

Let us now present our contribution to the present theory of indices of thickness and thinness.

We start with some preparatory observations that we will use
throughout. According to \cite{ALL}, see Proposition~3.3, a Banach space $X$ is \emph{almost square} if for every finite subset $(x_i)_{i=1}^n\subset S_X$ and $\eps > 0$ there exists $y \in S_X$ such that $\|x_i \pm y\| \le 1 + \eps$ for $i=1,2,\ldots,n$. From \cite{HLP}, see Proposition~2.4, $X$ is  \emph{octahedral} if for every finite subset $(x_i)_{i=1}^n\subset S_X$ and every $\eps>0$ there exists $y\in S_X$ such that $\|x_i\pm y\|>2-\eps$ for every $1,2,\ldots, n$. The definition of octahedrality goes back to \cite[p.~12]{G}; we have not used the original definitions in order to have comparable formulations of almost squareness and octahedrality. Two basic observations are that $t(X)=1$ is equivalent to $X$ being almost square and that $T(X)=2$ is the same as $X$ being octahedral. We also prove two lemmas on thickness and thinness of $c_0$-sums of Banach spaces and continue with a result that can be informally expressed as ``thin spaces can have whatever thickness you like''. More precisely, we prove that for every $\alpha\in[1,2]$, there is a Banach space $X$ with $T(X)=\alpha$ while $t(X)=1$. Then we will see that there are infinite-dimensional reflexive spaces $X$ and $Y$ for which $T(X)=1$ and $t(Y)=2$. We end Section 2 by studying the behaviour of thinness index when forming $\ell_p$-sums of Banach spaces (see Propositions~\ref{t-psum} and \ref{t-infsum}).

Next we address the problem of the relation between the thickness and thinness indices of the space and a subspace. Our question is motivated by the easy observation that $T(X)\geq T(X^{\ast\ast})$ for any Banach space $X$. To generalize this observation we will need some concepts. First, recall the definition of an ideal (or locally 1-complemented subspace):

\begin{defn}\label{def:super1comp} Let $X$ be a Banach space and $Y$ a subspace. $Y$ is called an \em ideal \em in $X$ if  for every $\eps>0$ and every finite-dimensional subspace $E\subset X$ there exists $T:E\to Y$ such that
\begin{itemize}
  \item [(i)] $Te=e$ for all $e\in Y\cap E$.
  \item [(ii)] $\|Te\|\leq (1+\eps)\|e\|$ for all $e\in E$.
\end{itemize} 
\end{defn}  
\noindent If we instead of (ii) above have the stronger condition
\begin{itemize}
  \item [(ii')] $(1+\eps)^{-1}\|e\|\leq\|Te\|\leq (1+\eps)\|e\|$ for all $e\in E$,
\end{itemize} 
then the ideal is called an \emph{almost isometric ideal} (ai-ideal). So, in other words, an ai-ideal is an ideal (a locally 1-complemented
subspace) where the local projections can be taken as almost
isometries. The notion of an ai-ideal was defined and studied in \cite{ALN}. 

With every ideal $Y\subset X$ there is an associated ideal
projection $P:\Xast\to\Xast$ with $\|P\|=1$ and $\ker P=Y^\perp$. It
is observed in \cite[Proposition~2.1]{ALN} that the local projections
can be taken as almost isometries whenever $P\Xast$ is a 1-norming subspace of $\Xast$, that is, when the ideal is \emph{strict}. There are, however, ai-ideals which are not strict, see \cite[Example~1]{ALN} or Remark~\ref{rem:gurarii} below. 

Any Banach space $X$ is an ai-ideal in $\Xastast$; this fact is usually referred to as the principle of local reflexivity. Our generalization of the observation $T(X)\geq T(X^{\ast\ast})$ is that ai-ideals in $X$ always have at least as high thickness as the space itself. Also these subspaces always have lower or equal thinness index. 

It is well-known that for a Banach space $X$ we have $\Xast = L_1(\mu)$ for some measure $\mu$, i.e. $X$ is \emph{Lindenstrauss}, if and only if $X$ is an ideal in every superspace. Some spaces are even ai-ideals in every superspace; these spaces are the Gurari{\u\i}-spaces (see \cite[Theorem~4.3]{ALN}). Being an ai-ideal in every superspace will imply that any Gurari{\u\i}-space has thickness index 2 and thinness index 1. 

For our last and perhaps most interesting result, recall the definition of an M-ideal: A subspace $Y$ of $X$ is called an \emph{M-ideal} if the associated ideal projection $P:\Xast\to\Xast$ is an L-projection, that is,
\[\|\xast\|=\|P\xast\|+\|(I-P)\xast\|\:\:\mbox{for all}\:\:\xast\in\Xast.\]

Proposition II.2.10 in \cite{HWW} tells us: If $Y\subset X$ is a subspace isometric to $c_0$, the original norm on $Y$ can be extended to an equivalent norm on $X$ in such a way that $Y$ becomes an M-ideal in $X$ with the new norm. 

We will prove that any Banach space $X$ containing an isomorphic copy of $c_0$ can be equivalently renormed so that, in this new norm, $c_0$ becomes an M-ideal in $X$ and, moreover, both the thickness and thinness index of $X$ equal 1.

\begin{rem}\label{rem:T2-renorm}
That $X$ can be equivalently renormed to have $T(X)=2$ if and only if
$X$ contains an isomorphic copy of $\ell_1$ was proved in \cite[Theorem~9.2]{GoKa}. See also \cite{HL} or \cite{P}.
\end{rem}

Sometimes we will say that $X$ is thick, meaning $T(X)=2$. Correspondingly, we sometimes say thin, meaning $t(X)=1$. But, as can be seen e.g. from Proposition~\ref{prop1} or Proposition~\ref{prop:gurarii} below, Banach spaces may very well be both thick and thin at the same time, so the terms should not be read too literally.

\section{Some preparatory observations and thin spaces with any kind of thickness}

One can easily observe that a Banach space $X$ satisfies the condition $t(X)=1$ if and only if it is almost square (see \cite[Proposition~3.3]{ALL}). Almost square spaces were introduced and studied in \cite{ALL}. In \cite{ALL} it is proved that if $X$ is almost square, then every convex combination of slices of $B_X$ has diameter 2. It is more or less folklore (see \cite[p.~12]{G}) that this slice property is in turn equivalent to $\Xast$ being octahedral. In \cite{GoKa} it is proved that octahedrality is equivalent to the condition that whenever the unit ball is covered by a finite number of balls, one of those balls already contains the unit ball itself, hence the thickness is 2. Thus we have 

\begin{prop}\label{prop:t-T} If $t(X)=1$, then $T(X^\ast)=2$.
\end{prop}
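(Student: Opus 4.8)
The statement is an immediate consequence of the three facts recalled in the paragraph preceding it, and the plan is simply to chain them together. First, by \cite[Proposition~3.3]{ALL} the hypothesis $t(X)=1$ is the same as saying that $X$ is almost square, so I may work with that reformulation. Next, I would invoke the result of \cite{ALL} that an almost square space has the property that every convex combination of slices of its unit ball has diameter $2$; the intuition is that if $y\in S_X$ almost bisects, i.e. $\|x_i\pm y\|\le 1+\eps$ for the near-``tops'' $x_i$ of the slices $S_1,\dots,S_n$, then the suitably renormalized points $x_i+y$ and $x_i-y$ both essentially lie in the convex combination $\sum_i\lambda_i S_i$ and are approximately $2\|y\|=2$ apart.

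Then I would pass to the dual via the folklore duality (\cite[p.~12]{G}): the diameter-$2$ behaviour of convex combinations of slices of $B_X$ is equivalent to octahedrality of $X^\ast$. Finally, octahedrality of a Banach space $Y$ is equivalent to $T(Y)=2$ --- this is the observation recorded earlier in the paper, resting on \cite{GoKa}, that if the unit ball of an octahedral space is covered by finitely many balls then one of those balls already contains the whole ball. Applying this with $Y=X^\ast$ yields $T(X^\ast)=2$, which is the claim.

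There is no genuine obstacle in this assembly; the only points requiring care are bookkeeping ones --- keeping track of $\eps$ and of the number $n$ of slices/functionals when translating between the slice formulation and the $\|x_i^\ast\pm y^\ast\|>2-\eps$ formulation of octahedrality. If one preferred a self-contained argument rather than citing \cite{ALL} and \cite{G}, the real work would be exactly the Hahn--Banach/separation step hidden in that duality: from functionals $x_1^\ast,\dots,x_n^\ast\in S_{X^\ast}$ one builds slices of $B_X$ on which these functionals almost norm, feeds a convex combination of these slices to the diameter-$2$ conclusion, and reads off from it a witness $y^\ast\in S_{X^\ast}$ with $\|x_i^\ast\pm y^\ast\|>2-\eps$ for all $i$.
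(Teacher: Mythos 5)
Your proposal is correct and follows exactly the same chain the paper uses in the paragraph preceding the proposition: $t(X)=1$ is equivalent to almost squareness (\cite[Proposition~3.3]{ALL}), almost square spaces have the diameter-$2$ property for convex combinations of slices, this is equivalent to octahedrality of $X^\ast$ (\cite[p.~12]{G}), and octahedrality is equivalent to thickness $2$ by \cite{GoKa}. Nothing to add.
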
 

\begin{rem}
The converse implication, $T(\Xast)=2\Rightarrow t(X)=1$, is not true. As an example, take $X=C[0,1]$. Then $T(\Xast)=2$ since $\Xast$ is octahedral, and also $t(X)=2$ by \cite[Lemma~8]{W}.
\end{rem}

We now study thinness and thickness indices of $c_0$-sums of sequences of Banach spaces. Let us first observe that $c_0$-sums are always thin.

\begin{lem}\label{lemma2} If $(X_n)$ is a sequence of Banach spaces, then $t(c_0(X_n))=1$.
\end{lem}

\begin{proof}   Let $(x_i)_{i=1}^n \in S_{c_0(X_n)}$ and $\varepsilon > 0$.
  Find $N$ such that $\|x_i(n)\| < \varepsilon$
  for all $n \ge N$. Choose any $y_N \in S_{X_N}$
  and define $y = (0,\ldots,y_N,0,\ldots) \in S_{c_0(X_n)}$.
  Then $\max_i \|x_i - y\| < 1 + \varepsilon$, and so the lemma is proved.
\end{proof}

\begin{rem} Note that there appears to be a misprint in \cite[Lemma 4.1]{BJ}. The authors state that $t(\ell_\infty(X_n))=1$, but with $X_n=\mathbb{R}$, we have $t(\ell_\infty)=2$ (see \cite[Lemma 8]{W}). 
 \end{rem}

\begin{rem}
The thinness of a subspace may be strictly bigger than the thinness of the space itself. Indeed, $t(\ell_1)=2$, but $t(c_0(\ell_1))=1$ by Lemma~\ref{lemma2}.
\end{rem}

The corresponding result on the thickness index is harder. The proof is essentially that of \cite[Theorem~2 (3)]{CPS} and is omitted. 

\begin{lem}\label{lemma1}
  If $(X_n)$ is a sequence of Banach spaces then
  $T(c_0(X_n)) = \inf_n T(X_n)$.
\end{lem}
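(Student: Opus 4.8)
The plan is to establish the two inequalities $T(c_0(X_n)) \le \inf_n T(X_n)$ and $T(c_0(X_n)) \ge \inf_n T(X_n)$ separately. Throughout I write $Z = c_0(X_n)$ and $c = \inf_n T(X_n)$; for $z \in Z$ I write $z(n) \in X_n$ for its $n$-th coordinate, so $\|z\| = \sup_n \|z(n)\|$, and for $y \in X_m$ I write $\iota_m(y) \in Z$ for the vector with $y$ in coordinate $m$ and zeros elsewhere. I will freely use the ball-covering description of $T$ together with the $\inf\sup\min$ formula from the introduction, and the facts recorded there that $T(X) \ge 1$ whenever $\dim X = \infty$ and $T(X) = 1$ whenever $\dim X < \infty$.

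For the upper bound I would fix $m$ and show $T(Z) \le T(X_m)$; taking the infimum over $m$ then finishes. Given $r > T(X_m)$, choose $y_1, \dots, y_k \in S_{X_m}$ with $B_{X_m} \subset \bigcup_{i} B(y_i, r)$ and set $w_i = \iota_m(y_i) \in S_Z$. The point is that for any $z \in B_Z$ one has $z(m) \in B_{X_m}$, so $\|z(m) - y_i\| \le r$ for some $i$, while the remaining coordinates are harmless because $\sup_{n \neq m}\|z(n)\| \le \|z\| \le 1 \le r$ (here $T(X_m) \ge 1$). Hence $\|z - w_i\| = \max\bigl(\|z(m) - y_i\|,\ \sup_{n \neq m}\|z(n)\|\bigr) \le r$, so that $B_Z \subset \bigcup_i B(w_i, r)$ and $T(Z) \le r$. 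Letting $r \downarrow T(X_m)$ gives $T(Z) \le T(X_m)$.

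For the lower bound I would use the $\inf\sup\min$ description of $T$ and produce, for an arbitrary finite family $w_1, \dots, w_m \in S_Z$ and every $\eps > 0$, a single $z \in S_Z$ with $\min_j \|z - w_j\| \ge c - \eps$. I may assume $c > 1$, since $T(Z) \ge 1$ always holds for infinite-dimensional $Z$; then every $X_n$ is infinite-dimensional, because a finite-dimensional summand would force $T(X_n) = 1 < c$. The key structural fact is that, since $w_j \in c_0(X_n)$, the sequence $(\|w_j(n)\|)_n$ tends to $0$ and has supremum $1$, so its maximum is attained: I pick $n_j$ with $\|w_j(n_j)\| = 1$, i.e.\ $w_j(n_j) \in S_{X_{n_j}}$. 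Let $F = \{n_j : j\}$ and $J_n = \{j : n_j = n\}$. For each $n \in F$ the vectors $\{w_j(n) : j \in J_n\}$ lie on $S_{X_n}$, so the inequality $\sup_{u \in S_{X_n}} \min_{j \in J_n}\|u - w_j(n)\| \ge T(X_n) \ge c$ furnishes $z_n \in S_{X_n}$ with $\min_{j \in J_n}\|z_n - w_j(n)\| \ge c - \eps$. Setting $z = \sum_{n \in F}\iota_n(z_n) \in S_Z$, every index $j$ is handled by its own coordinate $n_j$, giving $\|z - w_j\| \ge \|z_{n_j} - w_j(n_j)\| \ge c - \eps$. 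Thus $T(Z) \ge c - \eps$ for all $\eps$, and $T(Z) \ge c$.

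The main obstacle is the lower bound, and within it the possibility that several centers $w_j$ share a dominant coordinate $n$: one then needs a single $z_n \in S_{X_n}$ simultaneously far from all of $w_j(n)$, $j \in J_n$, which is precisely what the thickness of $X_n$ delivers through the $\inf\sup\min$ formula — but only because these points genuinely lie on $S_{X_n}$. This is why attaining $\|w_j(n_j)\| = 1$ (rather than merely approximating it) is essential, and why the reduction to $c > 1$ (which forces every summand to be infinite-dimensional, so that the $\inf\sup\min$ value really equals $T(X_n)$ and not $0$) is needed. The upper bound, by contrast, is comparatively routine once one observes that the supremum norm lets the off-$m$ coordinates be absorbed by the radius.
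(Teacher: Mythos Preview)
Your argument is correct. The upper bound is handled cleanly, and in the lower bound you correctly isolate the two delicate points: that each $w_j$ attains its norm at some coordinate $n_j$ (so $w_j(n_j)\in S_{X_{n_j}}$), and that when several $w_j$ share the same $n$ one needs the thickness of $X_n$ itself (via the $\inf\sup\min$ formula) to find a single $z_n$ far from all of them. Your reduction to $c>1$ is the right way to guarantee that all summands are infinite-dimensional so that this formula applies to each $X_n$.

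As for comparison with the paper: the authors do not actually prove this lemma; they simply state that ``the proof is essentially that of \cite[Theorem~2~(3)]{CPS} and is omitted.'' So there is no proof in the paper to compare against. Your write-up therefore supplies what the paper leaves out, and the strategy you use---embedding a cover of $B_{X_m}$ into the $m$-th slot for the upper bound, and building a witness $z$ supported on the finitely many dominant coordinates for the lower bound---is the natural one and is in the spirit of the result cited from \cite{CPS}.
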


\begin{rem} Observe that Lemma~\ref{lemma1} implies that there is in fact equality in \cite[Proposition~2.14 (1)]{BJ}. 
\end{rem}

With the observations we have made so far at hand, we get the following result:

\begin{prop}\label{prop1} For every $\alpha\in[1,2]$ there is a Banach space $X$ with $T(X)=\alpha$ while $t(X)=1$ and $T(\Xast)=2$.
\end{prop}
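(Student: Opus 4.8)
The plan is to combine the two $c_0$-sum lemmas already at our disposal. The key idea is that for a given $\alpha\in[1,2]$ we want a single building block $Y$ with $T(Y)=\alpha$, and then take a $c_0$-sum of countably many copies of $Y$; Lemma~\ref{lemma2} will hand us thinness $1$ for free, Lemma~\ref{lemma1} will give $T(c_0(Y))=\inf_n T(Y)=\alpha$, and Proposition~\ref{prop:t-T} will then give $T(X^\ast)=2$ because $t(X)=1$. So the whole proposition reduces to producing, for each $\alpha\in[1,2]$, an infinite-dimensional Banach space $Y$ with $T(Y)=\alpha$.

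For the endpoint cases this is immediate from the material recalled in the introduction: $T(c_0)=1$ handles $\alpha=1$ (indeed $X=c_0$ itself already works), and for $\alpha=2$ one can take $Y=\ell_1$ (Whitley's $T(\ell_p)=2^{1/p}$ with $p=1$), or simply note that $c_0(\ell_1)$ is thin by Lemma~\ref{lemma2} while $T(c_0(\ell_1))=\inf T(\ell_1)=2$ by Lemma~\ref{lemma1}. For $\alpha\in(1,2)$ the natural candidate is $Y=\ell_p$ with $2^{1/p}=\alpha$, i.e. $p=\log 2/\log\alpha\in(1,\infty)$; Whitley proved $T(\ell_p)=2^{1/p}=\alpha$ for exactly this range. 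Thus for every $\alpha\in[1,2]$ we have a space $Y_\alpha$ (equal to $c_0$, some $\ell_p$, or $\ell_1$) with $T(Y_\alpha)=\alpha$, and we set $X=c_0(Y_\alpha)=\bigl(\bigoplus_{n\in\N} Y_\alpha\bigr)_{c_0}$.

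It remains to read off the three conclusions. First, $t(X)=1$ by Lemma~\ref{lemma2}, since $X$ is a $c_0$-sum. Second, $T(X)=\inf_n T(Y_\alpha)=\alpha$ by Lemma~\ref{lemma1}. Third, since $t(X)=1$, Proposition~\ref{prop:t-T} yields $T(X^\ast)=2$. This completes the argument.

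There is essentially no serious obstacle here, since the two lemmas and Whitley's computation do all the heavy lifting; the only point requiring a moment's care is the choice of building block at the endpoints, where $\ell_p$ degenerates ($c_0$ on one side, $\ell_1$ on the other), and the observation that $c_0$ itself already realizes $\alpha=1$ so no sum is even needed there. One should also note in passing that $X$ is genuinely infinite-dimensional (so that $T$ is defined in the sense agreed in the introduction), which is clear since each $Y_\alpha$ is.
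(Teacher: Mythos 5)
Your proposal is correct and follows essentially the same route as the paper: take $X=c_0(\ell_p)$ with $2^{1/p}=\alpha$ for $\alpha\in(1,2]$, apply Lemma~\ref{lemma2} for $t(X)=1$, Lemma~\ref{lemma1} for $T(X)=\alpha$, Proposition~\ref{prop:t-T} for $T(X^\ast)=2$, and use $X=c_0$ for $\alpha=1$. Nothing further is needed.
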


\begin{proof} The statement ``and $T(\Xast)=2$'' is Proposition~\ref{prop:t-T}. From Whitley's paper (\cite[Lemma~4]{W}) we know that
  $T(\ell_p) = 2^{1/p}$ for $1 \le p < \infty$.
  From Lemma~\ref{lemma1} we get that also $T(c_0(\ell_p))=2^{1/p}$. 	
  From Lemma~\ref{lemma2} we know that $t(c_0(\ell_p))=1$. Thus the result has been proved for all $\alpha\in(1,2]$. For $\alpha=1$ consider $X=c_0$.
\end{proof}

It is clear that $t(X)>1$ and $T(X)<2$ for all reflexive Banach spaces, this follows from e.g. Remark \ref{rem:T2-renorm} and Proposition \ref{prop:t-T} above.
The next proposition shows that all other possible values of $t(X)$ and $T(X)$ are covered by infinite-dimensional reflexive spaces.

\begin{prop}\label{proprefl}
  For every $\alpha \in [1,2)$ there is an infinite-dimensional reflexive Banach space $X$
  with $T(X)=\alpha$, and
  for every $\alpha \in (1,2]$ there is an infinite-dimensional reflexive Banach space $X$
  with $t(X)=\alpha$.
 \end{prop}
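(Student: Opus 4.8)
The plan is to split according to whether $\alpha$ lies in the open interval $(1,2)$ or is one of the two endpoints. For $\alpha\in(1,2)$ both assertions are settled at once by $X=\ell_p$ with $p=\log 2/\log\alpha\in(1,\infty)$: this space is infinite-dimensional and reflexive, and by Whitley's results (\cite[Lemma~4]{W} for $T$, together with the matching lower bound) $T(\ell_p)=t(\ell_p)=2^{1/p}=\alpha$. It therefore remains only to produce an infinite-dimensional reflexive space with $T(X)=1$ and one with $t(X)=2$.

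The mechanism for the two endpoints is that a one-dimensional M-summand forces $T=1$ and a one-dimensional L-summand forces $t=2$. Concretely, if $X=\mathbb{R}v\oplus_\infty Z$ with $\|v\|=1$, then for $y=\alpha v+z\in B_X$ with $\alpha\ge 0$ one computes $\|y-v\|=\max(|\alpha-1|,\|z\|)=\max(1-\alpha,\|z\|)\le 1$, and symmetrically $\|y+v\|\le 1$ when $\alpha\le 0$; hence $B_X\subseteq B(v,1)\cup B(-v,1)$, so $T(X)\le 1$ and therefore $T(X)=1$ since $X$ is infinite-dimensional. If instead $X=\mathbb{R}v\oplus_1 Z$ with $\|v\|=1$, then for every $y=\alpha v+z\in S_X$ (so $|\alpha|+\|z\|=1$) we have $\|v+y\|=|1+\alpha|+\|z\|$ and $\|v-y\|=|1-\alpha|+\|z\|$, and at least one of these equals $2$ — namely $\|v+y\|$ when $\alpha\ge 0$ and $\|v-y\|$ when $\alpha\le 0$; thus the two-point family $\{v,-v\}\subset S_X$ gives $\inf_{x\in S_X}\max(\|v-x\|,\|-v-x\|)=2$, whence $t(X)=2$.

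To finish I would take $X=\mathbb{R}\oplus_\infty\ell_2$ in the first case and $X=\mathbb{R}\oplus_1\ell_2$ in the second; both are infinite-dimensional, and both are reflexive since a finite $\ell_p$-sum ($1\le p\le\infty$) of reflexive spaces is reflexive — here $(\mathbb{R}\oplus_\infty\ell_2)^\ast=\mathbb{R}\oplus_1\ell_2$ and $(\mathbb{R}\oplus_1\ell_2)^\ast=\mathbb{R}\oplus_\infty\ell_2$, so each is the other's dual and each equals its own bidual under the canonical embedding. I do not anticipate a real obstacle here: the only genuine content is choosing the examples. One is tempted to look for exotic reflexive spaces realising $T=1$ or $t=2$, but in fact adjoining a single coordinate in the $\ell_\infty$- (respectively $\ell_1$-) direction already pushes the relevant index to its boundary value without disturbing reflexivity, while the verifications above are short direct computations and $T(\ell_p)=t(\ell_p)=2^{1/p}$ is quoted from \cite{W}.
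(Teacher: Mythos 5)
Your proposal is correct and follows essentially the same route as the paper: $\ell_p$ covers $\alpha\in(1,2)$ via Whitley's computation, and the endpoints are realised by adjoining a one-dimensional $\ell_\infty$-summand (for $T=1$) respectively $\ell_1$-summand (for $t=2$) to a reflexive space. The only difference is that you verify the two endpoint claims by short direct computations, whereas the paper delegates them to \cite[Lemma~3]{CPS} and to Corollary~\ref{t-1sum}; both verifications check out.
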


\begin{proof}
  As we noted in the introduction,
  Whitley showed that $T(\ell_p)=2^{1/p}=t(\ell_p)$ for $1< p<\infty$
  and this covers the interval $(1,2)$.

  Let $Y$ be any infinite-dimensional reflexive Banach space.
  If we let $X=Y\oplus_\infty\mathbb{R}$, then it
  follows easily from (the proof of) \cite[Lemma~3]{CPS}
  that $T(X)=1$.
  On the other hand if we let $X=Y\oplus_1\mathbb{R}$
  then $t(X) = 2$ by Corollary~\ref{t-1sum} below, since $t(\mathbb{R})=2$.
\end{proof}

For $\ell_p$-sums we have the following result:
\begin{prop}\label{t-psum}
  Let $Y$ and $Z$ be Banach spaces and let $1 \le p < \infty$.
  Then $X = Y \oplus_p Z$ satisfies $t(X)\geq((t(Y)-1)^p+1)^{1/p}$.
\end{prop}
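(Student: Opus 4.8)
The plan is to lower-bound $t(X)$ by exhibiting, for a suitable finite family $(w_j)\subset S_X$, a quantity close to $((t(Y)-1)^p+1)^{1/p}$ that every $x\in S_X$ must exceed in max-distance. By definition of $t(Y)$, for every $\eta>0$ there is a finite family $(y_i)_{i=1}^m\subset S_Y$ such that every $y\in S_Y$ satisfies $\max_i\|y_i-y\|> t(Y)-\eta$; after rescaling we also get, for each $r\in(0,1]$, that every $y\in rB_Y$ obeys $\max_i\|y_i-y\|>t(Y)-\eta-(1-r)$, since moving from $y$ to $y/\|y\|\in S_Y$ costs at most $1-\|y\|\le 1-r$. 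Now fix $z_0\in S_Z$ and consider in $X=Y\oplus_p Z$ the family consisting of the vectors $(y_i,0)$ for $i=1,\dots,m$ together with the single vector $(0,z_0)$; note each has norm $1$ in $X$.

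Next I would analyze an arbitrary $x=(y,z)\in S_X$, so $\|y\|^p+\|z\|^p=1$. Write $\|y\|=r$, hence $\|z\|=(1-r^p)^{1/p}$. For the coordinate directions we compute in the $\ell_p$-sum:
\[
\|(y_i,0)-(y,z)\|^p=\|y_i-y\|^p+\|z\|^p=\|y_i-y\|^p+(1-r^p),
\]
so $\max_i\|(y_i,0)-x\|^p\ge (t(Y)-\eta-(1-r))^p+(1-r^p)$ whenever this makes sense, i.e. when $r$ is close enough to $1$ that $t(Y)-\eta-(1-r)\ge 0$. On the other hand,
\[
\|(0,z_0)-(y,z)\|^p=\|y\|^p+\|z_0-z\|^p=r^p+\|z_0-z\|^p\ge r^p.
\]
Thus the max-distance from $x$ to the whole family is at least
\[
\max\Bigl\{\bigl((t(Y)-\eta-(1-r))^p+(1-r^p)\bigr)^{1/p},\; r\Bigr\}.
\]
I would then minimize this lower bound over $r\in[0,1]$: for $r$ bounded away from $1$ the second term $r$ is already forced to be large once we check the worst case, while for $r$ near $1$ the first term tends to $(t(Y)-\eta)^p+0$... so the genuine competition is between the two branches, and a short calculus/convexity argument shows the minimum of the max is attained where the two branches are comparable, yielding a bound that converges to $((t(Y)-1)^p+1)^{1/p}$ as $\eta\to 0$. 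Letting $\eta\to0$ (and taking the corresponding $(y_i)$) gives $t(X)\ge((t(Y)-1)^p+1)^{1/p}$.

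The main obstacle I anticipate is the optimization step: one must verify that $\min_{r\in[0,1]}\max\{\,( (t(Y)-1-(1-r))^p+(1-r^p))^{1/p},\,r\,\}$ really equals $((t(Y)-1)^p+1)^{1/p}$, and in particular that the first branch is not smaller than claimed for intermediate $r$ (this uses that $s\mapsto s^p$ is increasing and a comparison of $(t(Y)-1-(1-r))^p+(1-r^p)$ with $(t(Y)-1)^p\cdot$something, together with $t(Y)\le 2$). Care is also needed about the range of $r$ for which the rescaling estimate $\max_i\|y_i-y\|>t(Y)-\eta-(1-r)$ is non-vacuous; for small $r$ one simply uses the crude bound $\max_i\|(y_i,0)-x\|\ge\|(0,z_0)-x\|\ge r$ is replaced by the other branch, so the two-branch $\max$ covers all $r\in[0,1]$ uniformly and no case is lost. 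Everything else — the $\ell_p$-sum norm identities and the $\varepsilon$-bookkeeping — is routine.
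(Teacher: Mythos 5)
Your construction and your two displayed norm identities are essentially the paper's (the paper uses only the vectors $(y_i,0)$ and the same rescaling estimate $\max_i\|y_i-y\|\ge (t(Y)-\eta)-(1-\|y\|)$), but the step you yourself flag as the main obstacle is precisely the step that is missing, and the route you sketch for filling it would not work: the minimum of your two-branch max is \emph{not} attained where the branches are comparable. In fact the first branch never drops below the target, because of the superadditivity inequality $(a+b)^p\ge a^p+b^p$ for $a,b\ge 0$ and $p\ge 1$ (monotonicity of the $\ell_p$-norms in $p$, which is the one-line lemma the paper invokes). Taking $a=t(Y)-\eta-1$ and $b=r=\|y\|$ gives, uniformly in $r\in[0,1]$,
\[
\bigl(t(Y)-\eta-1+r\bigr)^p+(1-r^p)\;\ge\;(t(Y)-\eta-1)^p+r^p+1-r^p\;=\;(t(Y)-\eta-1)^p+1,
\]
and letting $\eta\to0$ finishes the proof. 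Your second branch, the extra vector $(0,z_0)$ and the bound $\ge r^p$, is therefore never needed; indeed $g(r)=(t(Y)-\eta-1+r)^p+1-r^p$ has $g'(r)=p\bigl((t(Y)-\eta-1+r)^{p-1}-r^{p-1}\bigr)\ge 0$, so $g$ is increasing and its minimum over $[0,1]$ is at $r=0$, where the two branches are farthest apart rather than equal. A calculus argument aimed at a crossing point would come up empty.

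Two smaller points to tidy up. First, dispose of $t(Y)=1$ separately (the claimed bound is then the trivial $t(X)\ge 1$); once $t(Y)>1$ and $\eta<t(Y)-1$, the quantity $t(Y)-\eta-1+r$ is nonnegative for every $r\in[0,1]$, so your worry about the admissible range of $r$ disappears and no case is lost. Second, the point $y=0$ (where $y/\|y\|$ is undefined) should be checked directly: there $\max_i\|y_i-y\|=1$ and $\|z\|=1$, so $\max_i\|(y_i,0)-x\|^p=2\ge (t(Y)-1)^p+1$.
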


\begin{proof}
In \cite[Lemma~5.6 and the preceding remark]{ALL} it is noted that for $1\leq p<\infty$, $X\oplus_p Y$ is never almost square, i.e. $t(X\oplus_p Y)>1$. Since $((t(Y)-1)^p+1)^{1/p}=1$ when $t(Y)=1$ we may assume that $t(Y)>1$.

For any $1<\alpha < t(Y)$
  there exist $(y_i)_{i=1}^n \subset S_Y$
  and $\varepsilon > 0$ such that
  $\max_i \|y-y_i\| \ge \alpha + \varepsilon$
  for all $y \in S_Y$.
  For a given $\alpha < t(Y)$ consider $(y_i)_{i=1}^n \subset S_Y$
  and $\varepsilon > 0$ as above and
  define $x_i = (y_i,0) \in S_X$ for $i=1,2,\ldots,n$.

  For $x = (y,z) \in S_X$ we have
  $\|x\|^p = \|y\|^p + \|z\|^p = 1$. We will also need that
  \begin{equation*}
    1 - \|y\|
    = \frac{(1-\|y\|)}{\|y\|}\|y\|
    = \| \frac{y}{\|y\|} - y\|.
  \end{equation*}
  By the triangle inequality and monotonicity of the $\ell_p$-norms
  \begin{align*}
    \max_i\|x_i - x\|^p &= \max_i \|y_i-y\|^p + \|z\|^p \\
    &\ge \bigl(\max_i \|y_i-\frac{y}{\|y\|}\| -
    \| \frac{y}{\|y\|} - y\| \bigr)^p + \|z\|^p \\
    &\ge (\alpha + \varepsilon - 1 + \|y\| )^p + \|z\|^p \\
    &\ge (\alpha+\varepsilon-1)^p + \|y\|^p + \|z\|^p
    = (\alpha+\varepsilon-1)^p + 1.
  \end{align*}
  Since $\alpha < t(Y)$ and $x \in S_X$ are arbitrary, we obtain
  $t(X) \ge ((t(Y)-1)^p+1)^{1/p}$.
\end{proof}

\begin{rem}  As a general lower bound this is best possible since $t(\ell_p \oplus_p X)= 2^{1/p}$ by \cite[Proposition~4.3]{BJ} for any space with $t(X)=2$, for example  $X=\ell_1$.
\end{rem}

\begin{cor}\label{t-1sum}
\mbox{} 
\begin{itemize}
\item [(i)] If $X$ and $Y$ are Banach spaces, then $t(X \oplus_1 Y) \ge \max\{t(X),t(Y)\}$.
\item [(ii)] If $(X_j)^{\infty}_{j=1}$ is a sequence of non-trivial Banach spaces, then\\ $t(\ell_p(X_j))\geq\sup_j((t(X_j)-1)^p+1)^{1/p}$. Moreover, if $\sup_j t(X_j)=2$, then $t(\ell_p(X_j))=2^{1/p}$.
\end{itemize}
\end{cor}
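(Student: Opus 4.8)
The plan is to deduce both parts from Proposition~\ref{t-psum} by choosing the right two-summand splitting, and then to add a matching upper bound by a direct argument for the ``moreover'' clause.

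For (i), I would simply note that $X\oplus_1 Y$ is the case $p=1$ of Proposition~\ref{t-psum}: applying it with the roles ``$Y=X$, $Z=Y$'' gives $t(X\oplus_1 Y)\ge (t(X)-1)+1=t(X)$, and since $X\oplus_1 Y$ and $Y\oplus_1 X$ are isometric, the same proposition with the roles reversed gives $t(X\oplus_1 Y)\ge t(Y)$. Together these yield $t(X\oplus_1 Y)\ge\max\{t(X),t(Y)\}$.

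For the first inequality in (ii), fix an index $j_0$ and write $\ell_p(X_j)=X_{j_0}\oplus_p \ell_p\big((X_j)_{j\neq j_0}\big)$. Proposition~\ref{t-psum} (applied with $Y=X_{j_0}$) gives $t(\ell_p(X_j))\ge\big((t(X_{j_0})-1)^p+1\big)^{1/p}$, and since $j_0$ is arbitrary we may pass to the supremum over $j_0$, which is exactly the claimed bound. For the ``moreover'' part, suppose $\sup_j t(X_j)=2$. Since the map $s\mapsto\big((s-1)^p+1\big)^{1/p}$ is continuous and increasing on $[1,2]$, the supremum just obtained equals $2^{1/p}$ (for any $\eps>0$ there is $j$ with $t(X_j)>2-\eps$, whence $\big((t(X_j)-1)^p+1\big)^{1/p}>\big((1-\eps)^p+1\big)^{1/p}\to 2^{1/p}$), so $t(\ell_p(X_j))\ge 2^{1/p}$.

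It remains to prove the reverse inequality $t(\ell_p(X_j))\le 2^{1/p}$, which in fact holds for any sequence of non-trivial spaces; the argument mirrors the proof of Lemma~\ref{lemma2}. Given $(x_i)_{i=1}^n\subset S_{\ell_p(X_j)}$ and $\eps>0$, choose $N$ so large that $\sum_{j\ge N}\|x_i(j)\|^p<\delta$ for every $i$, pick any $y_N\in S_{X_N}$, and set $x=(0,\dots,0,y_N,0,\dots)\in S_{\ell_p(X_j)}$ with $y_N$ in coordinate $N$. Splitting the $p$-norm at coordinate $N$ and using $\sum_{j<N}\|x_i(j)\|^p\le 1$, $\|x_i(N)-y_N\|^p\le(1+\delta^{1/p})^p$, and $\sum_{j>N}\|x_i(j)\|^p<\delta$, one gets $\|x_i-x\|^p\le 2+\eta(\delta)$ with $\eta(\delta)\to 0$ as $\delta\to 0$; choosing $\delta$ small makes $\max_i\|x_i-x\|<2^{1/p}+\eps$. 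Combining this with the lower bound gives $t(\ell_p(X_j))=2^{1/p}$. No step here is genuinely difficult: the only points needing a little care are the continuity/monotonicity remark that lets one pull the supremum through $s\mapsto\big((s-1)^p+1\big)^{1/p}$, and the routine bookkeeping of the tail estimate in the upper bound.
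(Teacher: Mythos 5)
Your argument is correct and follows essentially the same route as the paper: both parts are read off from Proposition~\ref{t-psum} (with $p=1$ and the symmetry of $\oplus_1$ for (i), and the splitting $\ell_p(X_j)=X_{j_0}\oplus_p\ell_p\bigl((X_j)_{j\neq j_0}\bigr)$ together with continuity of $s\mapsto((s-1)^p+1)^{1/p}$ for (ii)). The only difference is that where the paper simply cites \cite[Lemma~4.1]{BJ} for the upper bound $t(\ell_p(X_j))\le 2^{1/p}$, you supply a correct direct proof modelled on Lemma~\ref{lemma2}, which is a harmless (and self-contained) substitute.
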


\begin{proof}
It is clear that (i) holds. For the moreover part in (ii) it suffices to observe that the upper bound is proved in \cite[Lemma~4.1]{BJ}.
\end{proof}

\begin{prop}\label{t-infsum}
  Let $X$ and $Y$ be a Banach spaces.
  Then $t(X \oplus_\infty Y) = \min\{t(X),t(Y)\}$.
\end{prop}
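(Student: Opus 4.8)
The plan is to prove the two inequalities $t(X\oplus_\infty Y)\le \min\{t(X),t(Y)\}$ and $t(X\oplus_\infty Y)\ge \min\{t(X),t(Y)\}$ separately, using the reformulation $t(Z)=\sup_{x_1,\dots,x_n\in S_Z} \inf_{x\in S_Z} \max_i\|x_i-x\|$ from the introduction. For the upper bound, the natural strategy is to lift a family $(x_i)$ in $S_{X\oplus_\infty Y}$ to its coordinate families $(x_i^X)$ in $X$ and $(x_i^Y)$ in $Y$; the subtlety is that these lifted vectors need not be unit vectors, so I would first rescale and estimate the error. Given $(x_i)_{i=1}^n\subset S_{X\oplus_\infty Y}$ and $\eps>0$, without loss of generality assume $t(X)=\min\{t(X),t(Y)\}$. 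The key observation is that for each $i$ either $\|x_i^X\|\ge 1/2$ or $\|x_i^Y\|\ge 1/2$ (in fact for at least one coordinate the norm equals $1$), but more usefully: since $t(X)\le t(Y)$, I want to find $y\in S_X$ close to all the $x_i^X$. Normalizing those $x_i^X$ with $\|x_i^X\|$ not too small and handling the small ones by the triangle inequality (a vector of small norm in $X$ is within $1+(\text{small})$ of any unit vector), I can pick $y\in S_X$ with $\max_i\|x_i^X-y\|\le t(X)+\eps$, then take $x=(y,0)\in S_{X\oplus_\infty Y}$ and compute $\|x_i-x\| = \max\{\|x_i^X-y\|,\|x_i^Y\|\}$; since $\|x_i^Y\|\le 1\le t(X)$ this is at most $t(X)+\eps$. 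Letting $\eps\to 0$ gives $\inf_{x}\max_i\|x_i-x\|\le t(X)$, hence the upper bound.

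For the lower bound, I would go the other direction: given $(a_i)_{i=1}^k\subset S_X$ nearly witnessing $t(X)$ (so $\max_i\|a_i-a\|\ge t(X)-\eps$ for all $a\in S_X$) and $(b_j)_{j=1}^m\subset S_Y$ nearly witnessing $t(Y)$, form the combined family $\{(a_i,b_1)\}_i\cup\{(a_1,b_j)\}_j$ or more symmetrically all pairs $(a_i,b_j)$ in $S_{X\oplus_\infty Y}$, and show that no $x=(a,b)\in S_{X\oplus_\infty Y}$ can be simultaneously close to all of them. The point is that $x=(a,b)$ has either $\|a\|=1$ or $\|b\|=1$ (since the max of the two norms is $1$); if $\|a\|=1$ then $a\in S_X$, so $\max_i\|a_i-a\|\ge t(X)-\eps\ge \min\{t(X),t(Y)\}-\eps$, and since $\|(a_i,b_j)-(a,b)\|\ge\|a_i-a\|$ we conclude $\max_{i,j}\|(a_i,b_j)-(a,b)\|\ge\min\{t(X),t(Y)\}-\eps$; symmetrically if $\|b\|=1$. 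Taking sup over $x$ of nothing — rather, this shows $\inf_x\max\|\cdot\|\ge\min\{t(X),t(Y)\}-\eps$ for this particular finite family, hence $t(X\oplus_\infty Y)\ge\min\{t(X),t(Y)\}-\eps$, and $\eps\to 0$ finishes it.

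The main obstacle is the bookkeeping in the upper bound when some coordinate $x_i^X$ has small (or zero) norm: one must be careful that normalizing introduces an error controlled by $1-\|x_i^X\|$, and that this error is absorbed because the other coordinate $\|x_i^Y\|$ is then close to $1$ and still bounded by $t(X)\ge 1$. A clean way to package this is to note $\|x_i^X-y\|\le \|x_i^X-x_i^X/\|x_i^X\|\| + \|x_i^X/\|x_i^X\| - y\| = (1-\|x_i^X\|)+\|x_i^X/\|x_i^X\|-y\|$ when $x_i^X\ne 0$, and to observe that the contribution $(1-\|x_i^X\|)$ is harmless precisely when we only need the final bound to beat $1$ on that index via the $Y$-coordinate; alternatively just treat indices with $\|x_i^X\|<1/2$ by the trivial bound $\|x_i^X-y\|\le 3/2<t(X)+\eps$ is not quite enough if $t(X)=1$, so the rescaling estimate is genuinely needed and should be done with the $\max$ structure kept in mind throughout. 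Everything else is a routine application of the sup–inf–max formula together with $\|(u,v)\|_\infty=\max\{\|u\|,\|v\|\}$ and the triangle inequality.
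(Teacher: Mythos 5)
Your overall architecture is the same as the paper's: for the lower bound you build a finite family in $S_{X\oplus_\infty Y}$ from witnessing families in $S_X$ and $S_Y$ and use that any norm-one $(a,b)$ has $\|a\|=1$ or $\|b\|=1$; for the upper bound you pass to the coordinate realizing the minimum, normalize, choose a good $y\in S_X$ and test against $(y,0)$. The lower bound is correct (your grid $\{(a_i,b_j)\}$ versus the paper's diagonal $\{(x_i,y_i)\}$ after padding is immaterial).

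The upper bound, however, has a genuine gap at exactly the point you flag as ``the main obstacle''. Write $r=\|x_i^X\|$ and $u_i=x_i^X/r$. Your estimate
$\|x_i^X-y\|\le\|x_i^X-u_i\|+\|u_i-y\|=(1-r)+\|u_i-y\|\le(1-r)+t(X)+\eps$
only yields $\le t(X)+\eps$ when $r=1$, and the crude alternative $\|x_i^X-y\|\le r+1$ only helps when $r\le t(X)+\eps-1$; for $t(X)$ near $1$ and $r$ near $1/2$ both bounds exceed $t(X)+\eps$. Your suggestion that the excess is ``absorbed'' via the $Y$-coordinate cannot work: the $\ell_\infty$-norm is a maximum, so a too-large $X$-coordinate term is never compensated by the $Y$-coordinate. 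The correct rescaling estimate --- which the paper isolates as a small lemma --- splits the other way:
\[
\|r u_i - y\| = \|r(u_i-y)-(1-r)y\| \le r\|u_i-y\|+(1-r)\|y\| = r\|u_i-y\|+(1-r)
\le r\bigl(t(X)+\eps\bigr)+(1-r)\le t(X)+\eps,
\]
the last step holding precisely because $t(X)+\eps\ge 1$. With this one-line replacement (and the observation that $\|0-y\|=1\le t(X)+\eps$ for indices with $x_i^X=0$) your upper bound goes through; as written it does not.
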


\begin{proof}
Let $\alpha$ and $\beta$ be such that $\alpha<t(X)$ and $\beta<t(Y)$. Then there exist $(x_i)_{i=1}^n \subset S_X$, $(y_j)_{j=1}^k \subset S_Y$ and $\eps>0$ such that $\max_i \|x_i - x\| \ge \alpha+\eps$ for all
  $x \in S_X$ and  $\max_j \|y_j - y\| \ge \beta+\eps$ for all
    $y \in S_Y$.

  Without loss of generality we may assume that $k=n$ by just repeating
  some vectors.
  Define $z_i = (x_i,y_i)$, $1 \le i \le n$.
  Let $z = (x,y) \in X \oplus_\infty Y$
  with $\|z\| = 1$. Then either $\|x\|=1$
  or $\|y\|=1$ and hence
  \begin{equation*}
    \max_i \|z_i - z\|
    = \max_i\{\|x_i-x\|,\|y_i-y\|\}
    \ge \min\{\alpha,\beta\}+\eps.
  \end{equation*}
  Thus $t(X \oplus_\infty Y) \ge \min\{t(X),t(Y)\}$.
  
 Note that the following holds in every Banach space: If two elements
 $x'$ and $x$ have norm one and $\|x' - x\| < a$ where $a\ge 1$, then
 for all $0 \le r \le 1$ we have $\|rx' - x\| < a$. Indeed, 
 \[
 \|rx' -  x\| =  \|rx' - rx + rx - x\| \le r\|x' - x\| + 1-r < a.
 \]
 
 Now, suppose $\min(t(X), t(Y)) = t(X)$ and let $\eps > 0$. Let $(x_i,
 y_i)_{i=1}^n$ be a finite set in the unit sphere of $X \oplus_\infty Y$. Let $u_i=x_i/\|x_i\|$ if $x_i\neq 0$. Then there is an element $x\in S_X$ such that $\max_{i}\|u_i-x\|<t(X)+\eps$. Consider the element $(x,0)$ from the unit sphere of $X \oplus_\infty Y$. By the previous
 paragraph we get 
 \[
 \max_i\|(x_i,y_i) - (x, 0)\| = \max_i\{\|\|x_i\|u_i-x\|,\|y_i\|\} < t(X) + \eps.
 \]
 Finally, if $x_i=0$ for every $i$, then for any $x\in S_X$ we have 
 \[
 \|(0,y_i)-(x,0)\|=1\le t(X).
 \]
\end{proof}

The aim of the following example is to show that although $T(\ell_1)=t(\ell_1)=2$, then by forming $\ell_p$-sums these indices behave quite differently. In \cite[Lemma~2]{CPS} it was shown that $T(\ell_1\oplus_2 \ell_1)=\sqrt{2+\sqrt{2}}$, but $t(\ell_1\oplus_2 \ell_1)=2$ as we will see from the next proposition.

\begin{prop}
$t(\ell_1\oplus_p \ell_1)=2$, $1\leq p\leq \infty$.
\end{prop}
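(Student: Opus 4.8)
The plan is to show that for $1 \le p \le \infty$, given any finite set $(w_i)_{i=1}^N \subset S_{\ell_1\oplus_p\ell_1}$ and any $\eps>0$, one can find $w\in S_{\ell_1\oplus_p\ell_1}$ with $\max_i\|w_i-w\|$ close to $2$; this forces $t(\ell_1\oplus_p\ell_1)=2$ since the thinness index is always at most $2$. Write $w_i=(u_i,v_i)$ with $u_i,v_i\in\ell_1$, so $(\|u_i\|^p+\|v_i\|^p)^{1/p}=1$ (with the obvious max-interpretation when $p=\infty$). The key feature of $\ell_1$ we exploit is the one Haller's remark already isolates for $L_1$ and which holds verbatim in $\ell_1$: for any finite collection $u_1,\dots,u_N$ of finitely-supported-up-to-$\eps$ vectors in $\ell_1$ and any $\delta>0$, there is a vector $e\in S_{\ell_1}$ supported far out on the axes (e.g. a single basis vector $\pm e_n$ with $n$ large) such that $\|u_i-te\| \ge \|u_i\| + t - \delta$ for all $0\le t\le 1$ and all $i$ — because $\|u_i-te\|=\|u_i\|_{\text{on the support tail}}+\dots$ and the mass of $u_i$ near coordinate $n$ is below $\delta$. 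Concretely $\|u_i \mp e_n\| \ge \|u_i\| + 1 - \eps$ once $n$ is chosen beyond the "$\eps$-tail" of all the $u_i$'s.

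First I would reduce to the case where each $w_i$ is finitely supported, up to an $\eps$-perturbation, which is harmless. Next, choose indices $n$ (large, in the first $\ell_1$ coordinate) and $m$ (large, in the second $\ell_1$ coordinate), both beyond the combined $\eps$-tails of all the $u_i$ and $v_i$ respectively. For $p<\infty$ set $w = (2^{-1/p}e_n,\ 2^{-1/p}e_m)\in S_{\ell_1\oplus_p\ell_1}$ (for $p=\infty$ set $w=(e_n,e_m)$, and for $p=1$ one takes $w=(e_n,0)$ or splits mass, adjusting the constant). Then
\[
\|w_i - w\|^p = \|u_i - 2^{-1/p}e_n\|^p + \|v_i - 2^{-1/p}e_m\|^p
\ge \bigl(\|u_i\| + 2^{-1/p} - \eps\bigr)^p + \bigl(\|v_i\| + 2^{-1/p} - \eps\bigr)^p.
\]
Now I want the right-hand side to be at least $2^p - o(1)$. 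Dropping the $\eps$'s, the quantity $(\|u_i\|+2^{-1/p})^p + (\|v_i\|+2^{-1/p})^p$ subject to $\|u_i\|^p+\|v_i\|^p=1$ is minimized when the mass is spread evenly, i.e. $\|u_i\|=\|v_i\|=2^{-1/p}$, giving $2\cdot(2\cdot 2^{-1/p})^p = 2\cdot 2^p\cdot 2^{-1} = 2^p$; at the other extreme $\|u_i\|=1,\|v_i\|=0$ it equals $(1+2^{-1/p})^p + 2^{-1} \ge 2^p$ as well (this is a one-variable convexity check). So $\max_i\|w_i-w\| \ge 2 - C\eps$, and letting $\eps\to0$ gives $t(\ell_1\oplus_p\ell_1)\ge 2$, hence equality.

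The main obstacle is the elementary but slightly fiddly minimization showing $(\,s+2^{-1/p})^p + (t+2^{-1/p})^p \ge 2^p$ whenever $s^p+t^p=1$, $s,t\ge0$; I would handle it by the substitution $s=\cos^{2/p}\theta$, $t=\sin^{2/p}\theta$ or simply by noting that $f(s)=(s+c)^p+((1-s^p)^{1/p}+c)^p$ with $c=2^{-1/p}$ has $f(2^{-1/p})=2^p$ and checking $f$ has no interior minimum below this, together with the endpoint value $f(1)=(1+c)^p+c^p$ which one verifies exceeds $2^p$. The endpoint cases $p=1$ and $p=\infty$ need a separate sentence each: for $p=\infty$ the computation $\|w_i-w\|=\max\{\|u_i-e_n\|,\|v_i-e_m\|\}\ge \max\{\|u_i\|,\|v_i\|\}+1-\eps = 2-\eps$ is immediate since $\max\{\|u_i\|,\|v_i\|\}=1$; for $p=1$ one uses $w=(e_n,0)$ and $\|w_i-w\| = \|u_i-e_n\|+\|v_i\| \ge \|u_i\|+1-\eps+\|v_i\| = 2-\eps$ directly from $\|u_i\|+\|v_i\|=1$.
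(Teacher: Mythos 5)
Your argument has the quantifiers of the wrong index. To prove $t(Z)\ge 2$ for $Z=\ell_1\oplus_p\ell_1$ you must exhibit one \emph{fixed} finite family $(z_i)\subset S_Z$ such that \emph{every} $z\in S_Z$ satisfies $\max_i\|z_i-z\|\ge 2-\varepsilon$. What you do instead is start from an \emph{arbitrary} finite family $(w_i)$ and produce a single point $w$ (depending on the $w_i$, through the choice of $n,m$ beyond their tails) that is far from all of them. That is the defining property of octahedrality, i.e.\ the statement $T(Z)=2$, not $t(Z)=2$ --- and it is in fact false here for $1<p<\infty$, since $T(\ell_1\oplus_2\ell_1)=\sqrt{2+\sqrt2}<2$ (quoted from Castillo--Papini--Sim\~oes in the paragraph preceding the proposition). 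So the strategy cannot be repaired just by reordering quantifiers.

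The computational core is also incorrect: the inequality $(s+2^{-1/p})^p+(t+2^{-1/p})^p\ge 2^p$ for $s^p+t^p=1$ fails at the endpoints. For $p=2$, $s=1$, $t=0$ it gives $(1+2^{-1/2})^2+2^{-1}=2+\sqrt2<4$; indeed for $p=2$ the expression equals $2+\sqrt2\,(s+\sqrt{1-s^2})$, so the symmetric point $s=t=2^{-1/2}$ is the \emph{maximum}, not the minimum, and the minimum $2+\sqrt2$ is exactly $T(\ell_1\oplus_2\ell_1)^2$. Concretely, your $w=(2^{-1/2}e_n,2^{-1/2}e_m)$ is only at distance $\sqrt{2+\sqrt2}\approx 1.85$ from $(e_1,0)$. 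The fix, which is what the paper does, is to use a whole finite net of test vectors $(\pm a_k e_1,\pm b_k e_1)$ with $(a_k,b_k)$ running over a $\delta$-net of $\{a^p+b^p=1\}$: for a given $z=(x,y)$ one selects from this fixed family the vector whose mass split matches $(\|x\|,\|y\|)$, and the $\ell_1$-identity $\max_{\pm}\|\pm a e_1-x\|=a+\|x\|$ (taking both signs, so no ``far-out'' support is needed) then yields $(2\|x\|)^p+(2\|y\|)^p=2^p$ exactly. Your endpoint computations for $p=1$ and $p=\infty$ are arithmetically fine but suffer the same quantifier reversal; the paper instead deduces these cases from $t(X\oplus_1Y)\ge\max\{t(X),t(Y)\}$ and $t(X\oplus_\infty Y)=\min\{t(X),t(Y)\}$ together with $t(\ell_1)=2$.
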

\begin{proof}
If $p=1$ or $p=\infty$ then the statement follows from Corollary~\ref{t-1sum} or Proposition \ref{t-infsum}, respectively, since $t(\ell_1)=2$. Suppose now that $1<p<\infty$.

  Let $a^p + b^p = 1$. Then
  \begin{equation*}
    \|(\pm a e_1, \pm b e_1)\|^p
    = a^p + b^p = 1.
  \end{equation*}
  We can parametrize by $a = (\cos \theta)^{2/p}$
  and $b = (\sin \theta)^{2/p}$.
  For any $x \in \ell_1$ and $a \in [0,1]$ we have
  \begin{equation*}
    \max_{\pm} \|\pm a e_1 - x\|
    = \max_{\pm} |a-x_1| + \sum_{n=2}^\infty |x_n|
    = a + |x_1| + \sum_{n=2}^\infty |x_n|
    = a + \|x\|.
  \end{equation*}
  Hence for $z = (x,y) \in S_Z$ we have
  $\|y\|^p = 1 - \|x\|^p$ and
  \begin{multline}\label{eq:3}
    \max_{\pm} \|(\pm a e_1, \pm b e_1)-(x,y)\|^p
    =
    \left( a + \|x\| \right)^p
    +
    \left( b + (1-\|x\|^p)^{1/p} \right)^p
    \\
    =\left( (\cos \theta)^{2/p} + \|x\| \right)^p
    +
    \left( (\sin \theta)^{2/p} + (1-\|x\|^p)^{1/p} \right)^p.
  \end{multline}
  Consider the continuous function on $[0,\pi/2] \times [0,1]$
  defined by
  \begin{equation*}
    f(\theta,\xi) = 
    \left( (\cos \theta)^{2/p} + \xi \right)^p
    +
    \left( (\sin \theta)^{2/p} + (1-\xi^p)^{1/p} \right)^p.
  \end{equation*}
  For $\theta = \arccos(\xi^{p/2})$ we have
  \begin{equation*}
    f(\arccos(\xi^{p/2}),\xi)
    = (2\xi)^p +
    (2(1-\xi^p)^{1/p})^p
    = 2^p.
  \end{equation*}
  Let $\varepsilon > 0$. Since $f(\theta,\xi)$ is uniformly
  continuous there exists $\delta > 0$ such that
  \begin{equation*}
    \|(\theta,\xi)-(t,u)\| < \delta
    \Rightarrow
    |f(\theta,\xi)-f(t,u)| < \varepsilon.
  \end{equation*}
  Choose a $\delta$-net $(\xi_i)_{i=1}^n$ for the interval $[0,1]$
  and define $\theta_i = \arccos(\xi_i^{p/2})$.
  
  Let $a_i = (\cos \theta_i)^{2/p}$ and $b_i = (\sin \theta_i)^{2/p}$ then
  for $1 \le i \le 4n$ and $1\le k\le n$ define
  $z_{i} = (\pm a_k e_1,\pm b_k e_1)$.

  For any $z = (x,y) \in S_Z$ we have by \eqref{eq:3}
  \begin{equation*}
    \max_i \|z_i - z\|^p = \max_i f(\theta_i,\|x\|).
  \end{equation*}
  Choose $\xi_j$ such that $|\xi_j - \|x\|| < \delta$.
  Then
  \begin{equation*}
    \max_i \|z_i - z\|^p = \max_i f(\theta_i,\|x\|)
    \ge f(\theta_j,\|x\|)
    \ge f(\theta_j,\xi_j) - \varepsilon
    = 2^p - \varepsilon.
  \end{equation*}
  This shows that $t(\ell_1 \oplus_p \ell_1) \ge
  \sqrt[p]{2^p-\varepsilon}$.
  Since $\varepsilon > 0$ was arbitrary we must have
  $t(\ell_1 \oplus_p \ell_1) = 2$.
\end{proof}

\section{Thinness and thickness of almost isometric ideals}

Using Goldstine's theorem it is easy to see that $T(X^{\ast\ast})\leq T(X)$. This inequality may be strict. As an example $T(C[0,1])=2$ while $T(C[0,1]^{\ast\ast})=1$ by \cite[Lemma~3]{W} since $C[0,1]^{\ast\ast}$ is not octahedral and can be viewed as a $C(K)$ space (cf. e.g. \cite[Theorems~4.3.7 and 4.3.8]{AK}). Note that this example answers a question in \cite{CP} whether we always have $T(X)=T(\Xastast)$. We will now put these observations into a broader perspective. A Banach space $X$ is always an ai-ideal in $X^{\ast\ast}$ and so the observation above that $T(X^{\ast\ast})\leq T(X)$ is a very
particular case of the following proposition.

\begin{prop}\label{prop2}
  If $Y$ is an ai-ideal in $X$, then $T(X) \le T(Y)$ and $t(Y)\leq t(X)$.
\end{prop}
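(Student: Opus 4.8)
The plan is to exploit the defining property of an ai-ideal: for every finite-dimensional $E \subset X$ and every $\eps > 0$ there is an almost-isometry $T \colon E \to Y$ fixing $E \cap Y$. For the thickness inequality $T(X) \le T(Y)$, I would start from an arbitrary finite collection $(y_i)_{i=1}^n \subset S_Y$ with the property that $B_Y \subset \bigcup_i B(y_i, r)$ for some $r$ slightly above $T(Y)$; the goal is to show $B_X$ can be covered (up to an $\eps$) by balls of roughly the same radius centered at points of $S_X$. The difficulty is that the $y_i$ already witness a covering of $B_Y$, but we need to cover $B_X$, which is genuinely larger. The right move is to pass through the ideal projection $P \colon X^* \to X^*$: since $\ker P = Y^\perp$, the adjoint-type structure lets one transfer a covering. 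Concretely, given $x \in B_X$, I want to approximate $x$ by something controlled by $Y$; here one uses that for the finite-dimensional space $E = \linspan(x, y_1, \dots, y_n)$ there is $T \colon E \to Y$ with $T y_i = y_i$ and $\|Tx\| \le 1 + \eps$, so $Tx/(1+\eps) \in B_Y$ is within $r$ of some $y_i$, and then $\|x - y_i\| \le \|x - Tx\| + \|Tx - y_i\|$. The term $\|x - Tx\|$ need not be small in a general ideal, so this naive bound fails — and \emph{this is the main obstacle}.

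\textbf{Resolving the obstacle via $Y^{**}$ and local reflexivity.} The cleaner route, which I expect to be the intended one, is to reduce to the known inequality $T(X^{**}) \le T(X)$ together with the fact that an ai-ideal $Y$ in $X$ sits inside $Y^{**}$ in a way compatible with $X$. Recall that if $Y$ is an ai-ideal in $X$ then (by the theory in \cite{ALN}) $Y^{**}$ can be identified with a norm-one complemented — indeed $1$-complemented — subspace of $X^{**}$, via the bi-adjoint $P^{**}$ of the ideal projection, and the restriction of the canonical $X \hookrightarrow X^{**}$ composed with this projection is an isometry on $Y$. More usefully: for an ai-ideal, $B_{Y}$ is $\eps$-dense in its own role, and one shows directly that any finite $\eps$-net argument on $S_Y$ produces, via the local almost-isometries, an $\eps$-net on $S_X$ with the same (up to $\eps$) radius. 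So the clean statement to prove first is the following transfer lemma: if $(y_i)_{i=1}^n \subset S_Y$ and $B_Y \subset \bigcup B(y_i,r)$, then for every $\eps>0$, $B_X \subset \bigcup B(y_i, r+\eps)$ — note the $y_i$ themselves, viewed in $X$, serve as centers. To see this, fix $x \in B_X$, put $E = \linspan(x,y_1,\dots,y_n)$, take $T\colon E\to Y$ almost-isometric fixing each $y_i$; then $Tx \in Y$ with $\|Tx\|\le 1+\eps$, so $Tx/\|Tx\| \in S_Y$ (if $Tx \ne 0$) lies within $r$ of some $y_i$. Now because $T$ is an \emph{almost isometry}, $\|x - y_i\|$ is within $2\eps$ of $\|Tx - Ty_i\| = \|Tx - y_i\| \le \|Tx\| - \|Tx/\|Tx\|\| + r \le \eps + r$, i.e. $\|x-y_i\| \le r + 3\eps$. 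Relabeling $\eps$ finishes the covering, hence $T(X) \le r$; letting $r \downarrow T(Y)$ gives $T(X) \le T(Y)$.

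\textbf{The thinness inequality.} For $t(Y) \le t(X)$ the argument is dual in spirit and slightly easier: given $(y_i)_{i=1}^n \subset S_Y$ and $\eps>0$, I want $y \in S_Y$ with $\max_i \|y_i - y\| \le t(X) + \eps$. Since $Y \subset X$ isometrically, the $y_i$ are also in $S_X$, so by definition of $t(X)$ there is $x \in S_X$ with $\max_i \|y_i - x\| < t(X) + \eps/2$. Now apply the ai-ideal property to $E = \linspan(x, y_1, \dots, y_n)$: there is an almost-isometry $T\colon E\to Y$ with $Ty_i = y_i$ and $(1+\delta)^{-1}\|e\| \le \|Te\| \le (1+\delta)\|e\|$. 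Then $y := Tx/\|Tx\| \in S_Y$ satisfies $\|y_i - y\| \le \|y_i - Tx\| + \|Tx - Tx/\|Tx\|\| \le (1+\delta)\|y_i - x\| + |\,\|Tx\| - 1\,| \le (1+\delta)(t(X)+\eps/2) + \delta$. Choosing $\delta$ small enough (depending on $\eps$ and $t(X) \le 2$) makes the right-hand side at most $t(X) + \eps$. Since $(y_i)$ and $\eps$ were arbitrary, $t(Y) \le t(X)$. I expect the only genuinely delicate point in writing this up carefully to be bookkeeping the several $\eps$'s and $\delta$'s in the thickness half so that the net stays of radius $T(Y) + \eps$; the structural content is entirely in the almost-isometric nature of the local projections, which is exactly what distinguishes ai-ideals from general ideals and makes the conclusion fail without it.
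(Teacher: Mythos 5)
Your argument is essentially the paper's own proof: for both inequalities one passes to $E=\linspan(x,y_1,\dots,y_n)$, applies the local almost-isometry $T$ fixing the $y_i$, normalizes $Tx$, and uses the two-sided almost-isometry bound to transfer distances back to $X$ (resp.\ into $Y$); the detour through $Y^{**}$ and the ideal projection in your second paragraph is never actually needed or used. The only slip is stating the transfer lemma for all of $B_X$: for $x$ in the interior of the ball $\|Tx - Tx/\|Tx\|\,\| = |\,\|Tx\|-1\,|$ need not be small, but this is harmless because $T(X)=T_W(X)$ only requires covering $S_X$, where your estimate is valid (and, as noted in the introduction, a finite cover of the sphere automatically covers the ball in infinite dimensions).
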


\begin{proof}
  Assume that $(y_i)_{i=1}^n$ is an $r$-net for $S_Y$.
  Let $\varepsilon > 0$.
  Let $x \in S_X$ and $E = \linspan((y_i),x)$.
  Find an $\varepsilon$-isometry $T : E \to Y$.
  Let $z = Tx/\|Tx\| \in S_Y$. Then $\|z - Tx\| \le \varepsilon$
  since $(1+\varepsilon)^{-1} \le \|Tx\| \le 1 + \varepsilon$.
  Now find $j$ such that $\|y_j - z \| \le r$, then
  \begin{equation*}
    \|y_j - x\| \le (1+\varepsilon)\|y_j - Tx\|
    \le (1+\varepsilon)(r + \varepsilon).
  \end{equation*}
  Since $T(X)$ is an infimum and $\varepsilon > 0$ is arbitrary,
  we get $T(X) \le T(Y)$.
  
  Next assume that $(y_i)_{i=1}^n \subset S_Y$.
  Let $\varepsilon > 0$.
  Find $x \in S_X$ such that
  $\max \|y_i - x\| < t(X) + \varepsilon$.
  Let $x \in S_X$ and $E = \linspan((y_i),x)$.
  Find $\varepsilon$-isometry $T : E \to Y$.
  Let $z = Tx/\|Tx\| \in S_Y$. Then, as above, $\|z - Tx\| \le \varepsilon$. Now
  \begin{equation*}
    \|y_j - z\| \le
    \|y_j - Tx\| + \varepsilon \le
    (1+\varepsilon)\|y_j - x\| + \varepsilon
    \le (1+\varepsilon)(t(X) + \varepsilon).
  \end{equation*}
Since $t(X)$ is an infimum and $\varepsilon > 0$ is arbitrary
  we have shown that $t(Y) \le t(X)$.
\end{proof}

An ai-ideal may well have strictly less thinness than its super-space.
\begin{rem}
 Note that $t(c_0)=1$ while $t(\ell_\infty)=2$, so we have that $t(X)<t(\Xastast)$ for $X=c_0$.
\end{rem}

Proposition \ref{prop2} will turn out to provide us with a class of spaces which are both thick and thin at the same time, namely the Gurari{\u\i}-spaces. The ``up to date reference'' for Gurari{\u\i}-spaces is \cite{GK} and the definition of a Gurari{\u\i}-space can be found there. We will, however, use the alternative description of Gurari{\u\i}-spaces (\cite[Theorem~4.3]{ALN}): The Gurari{\u\i}-spaces is exactly the class of Banach spaces with the property that they form an ai-ideal in every super-space. It is known that there is only one separable Gurari{\u\i}-space (up to linear isometry), and that a Gurari{\u\i}-space $X$ is universal in the sense that it contains an isometric copy of all Banach spaces $Y$ with $\dens(Y)\leq \dens(X)$ (see \cite{GK}). Gurari{\u\i} constructed the first separable such space in 1966 in his seminal paper \cite{Gu}.

\begin{prop}\label{prop:gurarii} If $X$ is a Gurari{\u\i}-space, then
  $T(X)=2$ and $t(X)=1$.
\end{prop}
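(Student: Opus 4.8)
The plan is to obtain both equalities from Proposition~\ref{prop2} together with a concrete, large superspace of $X$ whose thickness and thinness we can control. Since a Gurari\u\i-space $X$ is an ai-ideal in \emph{every} superspace, the inequalities $T(W)\le T(X)$ and $t(X)\le t(W)$ hold for every Banach space $W\supset X$. So the job reduces to: produce one superspace $W$ with $T(W)=2$ to force $T(X)=2$, and one (possibly different) superspace $V$ with $t(V)=1$ to force $t(X)=1$. Actually, one can do better and use the universality of $X$: because $\dens(c_0(X)) = \dens(X)$ (the Gura\u\i-space is universal among spaces of the same density character, and a countable $c_0$-sum does not increase the density character), $X$ contains an isometric copy of $c_0(X)$; equivalently, after identifying via that isometry, $X$ is itself an ai-ideal in $c_0(X)$. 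But in fact it is cleaner to pick the superspace.

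First I would recall that, by the alternative description cited from \cite[Theorem~4.3]{ALN}, $X$ is an ai-ideal in $W$ for \emph{every} $W$ containing $X$ isometrically. Apply this with $W = X \oplus_1 \mathbb{R}$ (or $W = c_0(X)$, or even $W = \ell_1 \oplus_1 X$): by Proposition~\ref{prop2} we get $T(W) \le T(X)$. Now choose $W$ thick. For instance $W = \ell_1$ would be ideal but $X$ need not embed isometrically in $\ell_1$; instead take any superspace $W \supseteq X$ with $T(W) = 2$. The simplest uniform choice: since $X$ is infinite-dimensional, $X$ contains an isometric copy of... hmm, $X$ contains $c_0$ but that has thickness $1$, not $2$. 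The right move is to embed $X$ isometrically into a space of the form $X \oplus_1 Y$ for suitable thick $Y$, or to invoke that $X$ embeds isometrically into $C[0,1]$-type or $\ell_\infty$-type spaces — but the most economical is: $X$ is an ai-ideal in $X^{\oplus_1 n}$ is not obviously thick either. Let me reconsider: the cleanest is to use that the Gura\u\i-space $X$ contains an isometric copy of $\ell_1$ (by universality, since $\dens(\ell_1) = \aleph_0 \le \dens(X)$), so in particular $X$ contains $\ell_1$; but we want a \emph{superspace} of $X$, not a subspace. The genuinely correct tool is: take $W = \ell_1 \oplus_1 X$ (so $X \hookrightarrow W$ isometrically as $\{0\}\oplus_1 X$), and note $T(W) = 2$ because $W$ contains a $1$-complemented copy of $\ell_1$ and is octahedral (or directly because $\ell_1 \oplus_1 X$ is octahedral — octahedrality is inherited nicely by $\ell_1$-sums with $\ell_1$, cf. Remark~\ref{rem:T2-renorm}). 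Then Proposition~\ref{prop2} gives $2 = T(W) \le T(X) \le 2$.

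Symmetrically, for thinness: take $V = c_0(X)$ (or $V = X \oplus_\infty c_0$), in which $X$ sits isometrically as the first coordinate, so $X$ is an ai-ideal in $V$. By Lemma~\ref{lemma2}, $t(c_0(X)) = 1$. Proposition~\ref{prop2} then yields $t(X) \le t(V) = 1$, and since $t(X) \ge 1$ always, $t(X) = 1$. Alternatively one can quote the introduction's remark that $t(X) = 1$ iff $X$ is almost square, and verify almost squareness directly from the Gura\u\i extension property, but routing through Lemma~\ref{lemma2} is shorter.

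The main obstacle — and the only real subtlety — is supplying the thick superspace for the first half: one must exhibit an isometric superspace $W \supseteq X$ with $T(W) = 2$ and justify $T(W) = 2$. I expect the paper does this by taking $W = \ell_1 \oplus_1 X$ (or $W = \mathbb{R} \oplus_1 X$) and invoking octahedrality of $\ell_1$-sums containing $\ell_1$ — equivalently Remark~\ref{rem:T2-renorm} style reasoning that a space containing a complemented $\ell_1$ in an $\ell_1$-sum is octahedral, hence $T = 2$. Everything else is a direct two-line application of Proposition~\ref{prop2}, Lemma~\ref{lemma2}, and the ai-ideal-in-every-superspace characterization of Gura\u\i-spaces.
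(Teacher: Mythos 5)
Your overall strategy coincides with the paper's: since a Gurari{\u\i}-space $X$ is an ai-ideal in every superspace, Proposition~\ref{prop2} reduces the statement to exhibiting one superspace of thickness $2$ and one of thinness $1$. For the thinness half you use $c_0(X)$ together with Lemma~\ref{lemma2}, which is verbatim the paper's argument. For the thickness half you genuinely diverge: the paper embeds $X$ into $Y=C(B_{X^\ast},w^\ast)$, notes that this compact space has no isolated points, and quotes Whitley's Lemma~3 to get $T(Y)=2$; you instead take $W=\ell_1\oplus_1 X$ and argue $T(W)=2$ via octahedrality. That works and is easy to make precise: given norm-one elements $(a_i,x_i)$ of $W$, choose $N$ with $|a_i(N)|<\eps/2$ for all $i$ and test against $(e_N,0)$ to get $\|(a_i,x_i)\pm(e_N,0)\|>2-\eps$; so $W$ is octahedral, hence $T(W)=2$ by the equivalence recorded in the introduction, and $2=T(W)\le T(X)$. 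Your route is marginally more self-contained (it needs only this elementary computation rather than Whitley's result for $C(K)$ with $K$ perfect), while the paper's $C(B_{X^\ast},w^\ast)$ is the canonical isometric superspace and needs no auxiliary construction. Two caveats: your parenthetical alternatives $X\oplus_1\mathbb{R}$ and $\mathbb{R}\oplus_1 X$ do not work --- $\mathbb{R}\oplus_1 W$ need not be octahedral for a general infinite-dimensional $W$ (e.g.\ $\mathbb{R}\oplus_1 c_0$ is not), and deducing its octahedrality for the Gurari{\u\i}-space would presuppose the very octahedrality of $X$ you are proving; only the $\ell_1\oplus_1 X$ version is valid. Also, the digression about $X$ containing an isometric copy of $c_0(X)$ by universality is irrelevant and should be cut: what you need is that $X$ is an ai-ideal in the superspace $c_0(X)$, which follows directly from the Gurari{\u\i} characterization, not from any containment the other way.
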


\begin{proof}  Let $X$ be a Gurari{\u\i}-space.
  Then, by \cite[Theorem~4.3]{ALN}, $X$ is an ai-ideal in any super-space.
  In particular, it is an ai-ideal in $Y=C(B_{X^\ast},w^\ast)$
  (which does not have isolated points). Thus, by \cite[Lemma~3]{W}
  and Proposition~\ref{prop2} above, $T(X)\geq T(Y)=2$.

  To see that $t(X)=1$, just note that, by \cite[Theorem~4.3]{ALN},
  $X$ is an ai-ideal in $c_0(X)$ ($X$ is isometrically isomorphic to
  $(X,0,0,\ldots)$), and the result follows from Lemma~\ref{lemma2}
  and Proposition~\ref{prop2}.
\end{proof}

\begin{rem}\label{rem:gurarii} Note that the embedding $X\to c_0(X)$ in the proof of
  Proposition~\ref{prop:gurarii} also makes $X$ a 1-complemented
  subspace of $c_0(X)$. Thus, Gurari{\u\i}-spaces $X$, embedded this
  way in $c_0(X)$, provide examples of non-strict, ai-ideals (see also \cite[Example~1]{ALL} where a non-strict ai-ideal in $c_0$ is constructed).   
\end{rem}

We have already mentioned that Gurari{\u\i}-spaces are Lindenstrauss
spaces. Lindenstrauss proves in his famous memoir (see \cite[Theorem~6.1]{Li}) that when $X^\ast=L_1(\mu)$ and $B_X$ has extreme points, then $X = X_1$, where $X_1$ is a subspace of some $C(K)$ space that contains $1$.
Using the unit, we see that \emph{$t(X)=2$ whenever $X$ is an $L_1$-predual with $\ext(B_X)\neq\emptyset$}. In particular we get:

\begin{prop} If $X$ is a Gurari{\u\i}-space, then $\ext(B_X)=\emptyset$.
\end{prop}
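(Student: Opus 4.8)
The plan is to obtain a contradiction from the assumption $\ext(B_X)\neq\emptyset$ by exhibiting two incompatible values of the thinness index. First I would recall what has already been established in the text: every Gurari{\u\i}-space is a Lindenstrauss space, i.e. an $L_1$-predual, so $X^\ast=L_1(\mu)$ for some measure $\mu$. This is the only structural input needed.

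Next I would invoke the italicized consequence of Lindenstrauss's \cite[Theorem~6.1]{Li} stated just before the proposition: if $X$ is an $L_1$-predual and $\ext(B_X)\neq\emptyset$, then (realizing $X$ as a subspace $X_1$ of a $C(K)$-space containing the constant function $1$ and using that unit) one has $t(X)=2$. So, under the assumption $\ext(B_X)\neq\emptyset$, the Gurari{\u\i}-space $X$ would satisfy $t(X)=2$.

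Finally I would appeal to Proposition~\ref{prop:gurarii}, which gives $t(X)=1$ for every Gurari{\u\i}-space $X$. Since $1\neq 2$, the assumption $\ext(B_X)\neq\emptyset$ is impossible, and therefore $\ext(B_X)=\emptyset$.

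There is essentially no genuine obstacle here: the argument is just the juxtaposition of Proposition~\ref{prop:gurarii} with the $L_1$-predual fact. The one point deserving a word of care is checking that the hypotheses of \cite[Theorem~6.1]{Li} are actually satisfied, and this is precisely what the remark that Gurari{\u\i}-spaces are Lindenstrauss spaces provides.
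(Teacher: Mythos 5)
Your proposal is correct and is exactly the argument the paper intends: the proposition is stated as an immediate consequence of the preceding observation that an $L_1$-predual with $\ext(B_X)\neq\emptyset$ has $t(X)=2$, combined with Proposition~\ref{prop:gurarii} giving $t(X)=1$ for Gurari{\u\i}-spaces. No further comment is needed.
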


We have just argued that a Lindenstrauss space has thinness index 2 as soon as $\ext(B_X)\neq\emptyset$. 

\begin{prop} For every $\alpha \in [1,2]$, there is a Lindenstrauss space with $t(X) = \alpha$. For any Lindenstrauss space we have $T(X^\ast) = 2$ and $t(\Xastast)=2$. 
\end{prop}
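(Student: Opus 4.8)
The plan is to treat the three assertions separately, each by exhibiting or invoking suitable Lindenstrauss spaces and combining the structural results already established. For the first assertion, I want Lindenstrauss spaces realising every $\alpha\in[1,2]$ as thinness index. The two endpoints are already in hand: $\alpha=1$ is any Gurari\u\i-space (Proposition~\ref{prop:gurarii} gives $t(X)=1$ and such spaces are $L_1$-preduals), and $\alpha=2$ is any $C(K)$ with $K$ infinite without isolated points, or simply $C[0,1]$, for which $t(X)=2$ by \cite[Lemma~8]{W} (and which is trivially a Lindenstrauss space). For the intermediate values $\alpha\in(1,2)$ I would pass to $c_0$-sums: since $c_0(X_n)$ is a Lindenstrauss space whenever each $X_n$ is (the class of $L_1$-preduals is stable under $c_0$-sums, as $\ell_1$-sums of $L_1(\mu_n)$ are again of the form $L_1(\mu)$), I can take a fixed Lindenstrauss space $Z$ with $t(Z)=1$, e.g.\ the Gurari\u\i\ space or $c_0$ itself, and I need a Lindenstrauss space with larger thinness. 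The natural candidate is $X=c_0(c_0,\dots)$-type construction won't raise $t$; instead I would use that $C(K)$-spaces are Lindenstrauss and hunt for $C(K)$ with prescribed thinness. This is where I expect trouble: controlling $t(C(K))$ for intermediate values is not obviously easy from what precedes. A cleaner route: take $X=c_0(\mathbb{R}, C[0,1], C[0,1],\dots)$ — no, that gives $t=1$ by Lemma~\ref{lemma2}. So the real device should be $\ell_\infty$-type or mixed sums; but $\ell_\infty(X_n)$ is generally not Lindenstrauss. I think the honest approach is: the class of Lindenstrauss spaces contains all $C(K)$, and by a compactification/Papini-style argument (cf.\ the Example with $K=\{c\}\cup[a,b]$ in the introduction, which has intermediate $\mu_i$), one can glue a discrete part and a continuous part with weights to tune $t$ between $1$ and $2$; one exhibits $K_\alpha$ and computes $t(C(K_\alpha))=\alpha$ directly, which is the main obstacle and the one genuinely new estimate needed here.

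For the second assertion, $T(X^\ast)=2$ for any Lindenstrauss space $X$: here $X^\ast=L_1(\mu)$, and it is classical (and used implicitly earlier in the paper, e.g.\ around $C[0,1]^\ast$) that every $L_1(\mu)$ is octahedral — indeed $L_1(\mu)$ has the Daugavet property when $\mu$ is non-atomic and is octahedral in general because $\ell_1 \hookrightarrow L_1(\mu)$ isometrically as a norming-friendly subspace, or more directly because finitely many norm-one $f_i$ can be "pushed apart" using a set of small $\mu$-measure on which all $f_i$ are negligible, exactly as in Lemma~\ref{lemma2}'s argument transposed to $L_1$. Since $T(X)=2$ iff $X$ is octahedral (stated in the introduction), this gives $T(X^\ast)=2$. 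Strictly, when $X$ is finite-dimensional $X^\ast=\ell_1^n$ and $T=0$; but the paper's convention is to work with infinite-dimensional spaces, and every infinite-dimensional $L_1(\mu)$ contains $\ell_1$ and is octahedral, so the claim stands in the relevant range.

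For the third assertion, $t(X^{\ast\ast})=2$ for any Lindenstrauss space $X$: the key fact is that $X^{\ast\ast}$ is itself a Lindenstrauss space — indeed $X^{\ast\ast\ast}=X^\ast\oplus_1 (\text{something})$... more precisely, the bidual of an $L_1$-predual is again an $L_1$-predual, and moreover $X^{\ast\ast}$ is a \emph{dual} $L_1$-predual, hence isometrically a $C(K)$ space with $K$ hyperstonean, in particular $B_{X^{\ast\ast}}$ has extreme points (the function $1$). Then the principle highlighted just before the proposition — "$t(X)=2$ whenever $X$ is an $L_1$-predual with $\ext(B_X)\neq\emptyset$", via Lindenstrauss's representation $X=X_1\subset C(K)$ containing $1$ — applies to $X^{\ast\ast}$ in place of $X$, giving $t(X^{\ast\ast})=2$. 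The only point needing care is verifying $\ext(B_{X^{\ast\ast}})\neq\emptyset$: this follows because $X^{\ast\ast}$, being a dual space, has a weak-$\ast$ compact unit ball, so by Krein–Milman $B_{X^{\ast\ast}}=\overline{\operatorname{conv}}^{w^\ast}\ext(B_{X^{\ast\ast}})$, in particular the extreme-point set is nonempty. I expect this part to be short once the bidual-is-a-$C(K)$ fact is cited (e.g.\ from \cite{HWW} or \cite{Li}); the genuine labour of the whole proposition is concentrated in the intermediate-thinness construction of the first sentence.
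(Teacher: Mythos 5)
Your treatment of the second and third assertions matches the paper's: $T(X^\ast)=2$ because $X^\ast=L_1(\mu)$ is octahedral, and $t(\Xastast)=2$ because $\Xastast$ is again a Lindenstrauss space whose unit ball has extreme points (being a dual ball), so the principle ``$t(X)=2$ whenever $X$ is an $L_1$-predual with $\ext(B_X)\neq\emptyset$'' stated just before the proposition applies. Those two parts are fine as written.

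The genuine gap is exactly where you flag it yourself: the intermediate values $\alpha\in(1,2)$ of the first assertion. Your proposal correctly rules out $c_0$-sums (they force $t=1$ by Lemma~\ref{lemma2}) and $\ell_\infty$-sums (not Lindenstrauss), and then only gestures at ``gluing a discrete and a continuous part with weights'' without producing a space or an estimate. But this construction is the entire content of the first sentence of the proposition; without it the claim is unproved for all $\alpha\in(1,2)$. The paper's device is the one-parameter family $X_r=\{f\in C[0,1]: f(0)=rf(1)\}$ for $r>1$, which is a standard $L_1$-predual (see \cite[p.~83]{HWW}), together with the computation $t(X_r)=1+\frac{1}{r}$. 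The point is that the constraint forces $|f(1)|\le\frac{1}{r}$ for every $f\in B_{X_r}$. For the lower bound one takes $f_1(x)=(1-x)+\frac{1}{r}x$ and $f_2=-f_1$: any $g\in S_{X_r}$ attains modulus $1$ at some $x_0$, and then $\max_i\|f_i-g\|\ge 1+\frac{1}{r}$ since $|f_1(x_0)|\ge\frac1r$. For the upper bound, given finitely many $f_i\in S_{X_r}$ and $\eps>0$, continuity gives an interval $(a,1)$ on which all $|f_i|<\frac{1}{r}+\eps$, and a norm-one bump supported there yields $\max_i\|f_i-g\|<1+\frac{1}{r}+\eps$. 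Letting $r$ range over $(1,\infty)$ covers all $\alpha\in(1,2)$. So your overall architecture is right, but the one estimate you identify as ``the genuine labour'' is indeed missing and must be supplied for the proof to stand.
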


\begin{proof} $T(\Xast)=2$ because $\Xast$ is octahedral and $t(\Xastast)=2$ because $\Xastast$ is a Lindenstrauss space with $\ext(B_X)\neq\emptyset$. We need to consider the case $\alpha\in(1,2)$. For this, let $r>1$ and $X_r=\{f \in C[0,1] : f(0) = rf(1)\}$. Then the spaces $X_r$ are all $L_1$-preduals (see e.g., \cite[p.~83]{HWW}). We are going to show that $t(X_r)=1+\frac{1}{r}$. Note that for all $f\in B_{X_r}$ we have $|f(1)|\leq\frac{1}{r}$.

To see that $t(X_r)\geq 1+\frac{1}{r}$ let $f_1(x) = (1-x) + \frac{1}{r}x$ and $f_2(x) = -f_1(x)$. If $\|g\|=1$, then there is a point $x_0$ where $|g(x_0)| =1$. Without loss of generality assume $g(x_0) = -1$. Then $\frac{1}{r} + 1 \leq |f_1(x_0) - g(x_0)|  \leq \max \|f_i - g\|$. Hence $t(X_r) \geq 1+\frac{1}{r}$.

To see that $t(X_r)\leq 1+\frac{1}{r}$ let $f_1,f_2,\ldots,f_n \in S_{X_r}$ and $\eps > 0.$ Find an interval $(a,1)$ where $|f_i(x)| < \frac{1}{r} + \eps.$ Now choose any $g \in S_X$ with support on $(a,1)$. Then $\|f_i - g\| < 1 + \frac{1}{r} + \eps$, hence $t(X) \leq 1+ \frac{1}{r}.$
\end{proof}

\section{M-ideal-renorming of copies of $c_0$}

Recall that $Y$ is an M-ideal in $X$ if the ideal projection $P : X^* \to X^*$, with $\|P\|=1$ and $\ker P = Y^{\perp}$, is an L-projection. If $X$ is an M-ideal in $\Xastast$ (like $c_0$ is in $\ell_\infty$),
$X$ is called an \emph{M-embedded} space.

M-ideals play a very important role in Banach space theory; the main
reference for the theory of M-ideals is \cite{HWW}. When $X$ is a
Banach space and $X$ contains an isometric copy of $c_0$, the
$c_0$-norm can always be extended to all of $X$ such that, in
this new norm, $c_0$ is an M-ideal in $X$, see
\cite[Proposition~II.2.10]{HWW}. We will now prove that we can extend in
such a way that $T(X)=t(X)=1$ in the new norm. The proof is based on an
idea which appears in \cite[Theorem~3.14]{ALL}, which in turn used ideas from \cite[Lemma 2.3]{BLZ}.

\begin{thm}\label{thm:c0-renorm-Mid}
  If $X$ contains an isomorphic copy of $c_0$, then $X$ can be renormed
  so that, in this new norm, $c_0$ becomes an M-ideal in $X$ and $T(X)=t(X)=1$.
\end{thm}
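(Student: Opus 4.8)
The plan is to combine the $c_0$ M-ideal renorming from \cite[Proposition~II.2.10]{HWW} with a further twist that simultaneously forces almost-squareness (hence $t(X)=1$) and octahedrality (hence $T(X)=2$, which together with the $c_0$ structure we will argue forces, in fact, that the ball is covered by one ball, so $T(X)=1$ --- wait, that is not what the statement says). Let me restate: we want \emph{both} indices equal to $1$, so $X$ must be almost square ($t(X)=1$) \emph{and} $T(X)=1$. The latter is the ``thin'' side made even thinner; since almost squareness already gives $T(X^\ast)=2$ by Proposition~\ref{prop:t-T}, we cannot hope for $X^\ast$ to be small, but $T(X)=1$ just says $B_X$ is covered by finitely many small balls centered on the sphere. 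The natural way to get $T(X)=1$ is to arrange that $X$ has a subspace on which the norm looks like the $c_0$-norm in a strong enough sense: recall $T(c_0)=1$, and by Lemma~\ref{lemma1} a $c_0$-sum $c_0(X_n)$ has $T=\inf_n T(X_n)$, while by Lemma~\ref{lemma2} it always has $t=1$. So the target structure to aim for is: renorm $X$ so that $X$ becomes (isometrically) a space of the form $c_0(X_n)$ for suitable $X_n$ with $\inf_n T(X_n)=1$, \emph{and} so that the copy of $c_0$ sits as an M-ideal.

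First I would fix an isomorphic copy $c_0 \hookrightarrow X$, and use it to build a decomposition. Write $X$, after an initial harmless renorming, so that the copy of $c_0$ is spanned by a sequence $(e_n)$ equivalent to the unit vector basis. The idea from \cite[Theorem~3.14]{ALL} and \cite[Lemma~2.3]{BLZ} is to define the new norm on $X$ by a formula of the shape
\[
\tn x \tn = \max\Bigl\{ \|x\|_0,\ \limsup_{n} \bigl(\text{something involving the } c_0\text{-coordinates of } x\bigr)\Bigr\},
\]
where $\|\cdot\|_0$ is a fixed equivalent norm making $c_0$ an M-ideal (via \cite[Proposition~II.2.10]{HWW}), and the $\limsup$ term is engineered so that for every finite family $x_1,\dots,x_m$ one can find a far-out basis vector $e_N$ (or a tail block) which is $\tn\cdot\tn$-almost orthogonal to all of them in the almost-square sense $\tn x_i \pm e_N\tn \le 1+\eps$. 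Concretely, $e_N$ for large $N$ lies in the $c_0$ part, $\|x_i\|_0$-norm of $x_i \pm e_N$ should be controllable because $c_0$ is an M-ideal (so $\|x_i \pm e_N\|_0 \approx \max\{\|x_i\|_0, \|e_N\|_0\}$ asymptotically, that is what M-ideal gives modulo $\eps$), and the $\limsup$ term ignores finitely supported perturbations. This yields $t(X)=1$.

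For $T(X)=1$ the key observation is that the same M-ideal renorming, pushed a little further, lets one cover $B_X$ by balls of radius close to $1$ centered at $\pm e_k$ for $k=1,\dots,K$. Indeed if $x\in B_X$, then because $c_0$ is an M-ideal in $(X,\tn\cdot\tn)$, there is a finitely supported $c_0$-element $u$ with $\tn x - u\tn \le 1+\eps$ (M-ideals are proximinal in a strong, approximate sense: $\operatorname{dist}(x, c_0)$ together with the L-decomposition of $X^\ast$ forces $\tn x-u\tn$ close to $\max\{\operatorname{dist}(x,c_0), \|u\|\}$ for suitable $u$), and among finitely supported $c_0$-elements of norm $\le 1+\eps$ a finite $\eps$-net exists in the $c_0$-norm, which by construction of $\tn\cdot\tn$ on the $c_0$ part agrees with $\tn\cdot\tn$ there. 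Recentering this net onto $S_X$ via the $e_k$'s, or more carefully onto the sphere elements $e_k/\tn e_k\tn$, gives a finite family $(z_j)\subset S_X$ with $B_X \subset \bigcup_j B(z_j, 1+\eps)$, so $T(X)\le 1+\eps$ for every $\eps$, hence $T(X)=1$. Equivalently one can simply verify that $(X,\tn\cdot\tn)$ is isometric to a space $c_0(X_n)$ with each $X_n$ a ``line plus a small remainder'' having $T(X_n)\to 1$, and invoke Lemma~\ref{lemma1} directly.

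The main obstacle I anticipate is making the norm formula do \emph{both} jobs at once without the two requirements fighting each other: the $\limsup$/tail term has to be large enough to supply octahedral-type directions $e_N$ for almost squareness, yet the decomposition has to remain genuinely $c_0$-like (no $\ell_1$ or $\ell_p$ leaking in) so that $T(X)=1$ survives. Getting the M-ideal condition to hold simultaneously --- i.e. checking that the associated projection $P:X^\ast\to X^\ast$ with kernel $c_0^\perp$ is an L-projection in the new norm --- is exactly where the explicit shape of the norm from \cite[Proposition~II.2.10]{HWW} and \cite[Lemma~2.3]{BLZ} must be respected rather than tinkered with freely; the safe route is to start from their norm and add only a $c_0$-supported, finitely-perturbation-insensitive term, then verify the L-decomposition of $X^\ast$ is untouched because the added term only refines the norm on the $c_0$ summand, on which the M-ideal (L-summand in the dual) structure is already in place. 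Routine $\eps$-bookkeeping aside, that compatibility check is the heart of the argument.
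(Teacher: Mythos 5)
Your overall target is right---a max-type (``$\ell_\infty$-like'') extension of the $c_0$-norm so that the $c_0$ directions are simultaneously available for almost squareness and for a two-ball covering---but the proposal has genuine gaps at all three places where the work actually happens. First, the norm is never constructed: a ``formula of the shape $\max\{\|x\|_0,\limsup_n(\cdots)\}$'' is not a definition, and the $\limsup$-of-coordinates idea does not even make sense off the closed span of $(e_n)$ in a general (possibly non-separable) $X$. The device the paper uses is concrete: for each separable $Y\supset c_0$ take a Sobczyk projection $P_Y:Y\to c_0$ with $\|P_Y\|\le 2$, set $\|x\|_Y=\max\{\|P_Yx\|,\|x-P_Yx\|\}$, and define $\tn x\tn$ as the limit of a convergent subnet of $(\|x\|_Y)_Y$ via Tychonoff. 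Every subsequent estimate is run by passing to a single separable $Y$ on which $\tn\cdot\tn$ and $\|\cdot\|_Y$ agree on the finitely many vectors in play. Without some such mechanism your ``compatibility check'' for the M-ideal property is just an assertion; adding a term to a norm can easily destroy an L-decomposition of the dual, and the paper instead verifies the M-ideal property directly through the $3$-ball property, producing $z=P_Y(x)\in c_0$ with $\tn x+y_i-z\tn\le 1+2\eps$.

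Second, your argument for $T(X)=1$ contains a false step: the set of finitely supported elements of $(1+\eps)B_{c_0}$ has \emph{no} finite $\eps$-net for $\eps<1$ ($B_{c_0}$ is not totally bounded), so you cannot recenter onto a finite family that way. Note also that proximinality only gives you a nearest point $u\in c_0$ with $\|u\|\le 2$, and covering $2B_{c_0}$ by balls of radius close to $1$ is impossible. What actually makes $T(X)=1$ work is that the max-norm construction forces the $c_0$-component to lie in $B_{c_0}$: for $x\in B_{(X,\tn\cdot\tn)}$ one gets $\|P_Yx\|\le\|x\|_Y=\tn x\tn\le 1$ and $\|x-P_Yx\|\le 1$, and then the elementary fact $B_{c_0}\subset B_{c_0}(e_1,1)\cup B_{c_0}(-e_1,1)$ (split on the sign of the first coordinate) transfers through $\tn x\pm e_1\tn=\max\{\|P_Yx\pm e_1\|,\|x-P_Yx\|\}$ to give $B_X\subset B(e_1,1)\cup B(-e_1,1)$. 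Your fallback suggestion that $(X,\tn\cdot\tn)$ is isometric to some $c_0(X_n)$ is not justified and is not needed. So the proposal identifies the right phenomenon but does not supply the construction or a correct proof of either the M-ideal property or $T(X)=1$.
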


\begin{proof}
  First, \cite[Lemma II.8.1]{DGZ}, we can
  renorm $X$ so that it contains an isometric copy of $c_0$. Denote by
  $\|\cdot\|$ this new norm on $X$. Let 
  \[A = \{Y \subset X: c_0 \subset Y, Y \text{ separable}\},\]
  and order $A$ by inclusion, i.e., $Y_2 \le Y_1$ if $Y_2 \subset Y_1$. 
  For every $Y \in A$ 
  there exists, by Sobczyk's theorem, a projection $P_Y$
  onto $c_0$ with norm $2$ or less. Let $P_Y$ be such a projection and
  for each $Y \in A$ and $x \in Y$ let
  \[\|x\|_Y:= \max\{\|P_Y(x)\|, \|x - P_Y(x)\|\}.\]
  By letting $||x||_{Y} = 0$ for $x \notin Y$ we can consider $(||x||_Y)_{Y\in A}$
  as a net in $\Pi_{x \in X} [0,3||x||]$. By Tychonoff's theorem this
  net has a convergent subnet, still denoted $(||x||_Y)_{Y\in A}$, and we
  may define \[ |||x||| = \lim_Y ||x||_Y. \]
  
  It is straightforward to show that $\tn \cdot\tn $ is a norm on $X$
  which satisfies $\frac{1}{2}\|x\| \le \tn x\tn  \le 3\|x\|$. Also
  $\tn \cdot\tn$ extends the max norm $\|\cdot\|$ on $c_0$. It was shown in \cite[Theorem~3.14]{ALL} that this norm is almost square, i.e., $t(X)=1$ in this norm.

  We want to show that $c_0$ is an M-ideal in $X$ in this new norm.
  Let $x \in B_{(X,\tn \cdot\tn)}$, $y_1,y_2,y_3 \in B_{c_0}$ and $\varepsilon > 0$.
  Let $y_0 = 0$.

  Let $(z_n)$ be a sequence which is dense in $c_0$ and let $z_0 = 0$.
  Let $(\varepsilon_n)_{n=1}^\infty$ be a strictly decreasing null
  sequence of positive reals.

  Let $Y_0 = \linspan\{x, c_0\}$ and choose $Y_1 \in A$
  with $Y_1 \supset Y_0$ such that
  \begin{equation*}
    \left| \; \tn x + y_i - z_0 \tn  - \|x + y_i - z_0\|_{Y_1} \right| < \varepsilon_1
  \end{equation*}
  for $i=0,1,2,3$.
  Then for $n \ge 1$ inductively choose $Y_{n+1}  \in A$ with $Y_{n+1} \supset
  Y_n$ such that
  \begin{equation*}
    \left| \; \tn x + y_i - z_k \tn  - \|x + y_i - z_k\|_{Y_n} \right| < \varepsilon_n
  \end{equation*}
  for every $k \le n$ and $i=0,1,2,3$.
  (Note that the inequality above holds also for every $Y \in A$ with
  $Y \supset Y_n$.) Put $Y = \overline{\cup_{n = 1}^\infty Y_n}$. Note
  that $Y \in A$ as $c_0 \subset Y$ and $Y$ is separable. Observe that
  for $i = 0,1,2,3$ and all $n \ge k$ we have 
  \begin{align*}
    &\:\:\:\:\:\left|\: \tn x + y_i - z_k \tn  - \|x + y_i - z_k\|_{Y} \right|\\& \le
    \left|\: \tn x + y_i - z_k \tn  - \|x + y_i - z_k\|_{Y_n} \right| < \varepsilon_n,
  \end{align*}
  so $\tn x + y_i - z_k\tn  = \|x + y_i - z_k\|_Y$ as $\varepsilon_n \downarrow 0$.
  In particular, we have
  \begin{equation*}
    \|x - P_Y(x)\| \le \|x\|_Y = \|x + y_0 - z_0 \|_Y =\tn x + y_0 - z_0 \tn \leq 1.
  \end{equation*}
  Let $z = P_Y(x)$. Choose $j$ such that $\|z - z_j\|_{c_0} = \tn z -
  z_j \tn < \varepsilon$.
  Then we have
  \begin{equation*}
    \tn x + y_i - z \tn \le \tn x + y_i - z_j \tn + \varepsilon
    = \|x + y_i - z_j\|_Y + \varepsilon,
  \end{equation*}
  and since
  \begin{align*}
    \| x + y_i - z_j \|_Y &= \max\{\|P_Y(x) + y_i - z_j\|,
    \|x - P_Y(x)\|\}\\
    &\le \max\{\|y_i\| + \|z - z_j\|, 1\} \le 1 + \varepsilon,
  \end{align*}
  we get $\tn x + y_i - z \tn \le 1 + 2\varepsilon$.

To see that $T(X)=1$ we will show that in the new norm
  $B(e_1,1) \cup B(-e_1,1)$ covers $B_{(X,\tn \cdot\tn)}$.

  Let $x \in B_{(X,\tn \cdot\tn)}$. We use the same trick as before
  and find a separable subspace such that
  \begin{equation*}
    \tn x + z \tn  = \|x + z \|_Y
  \end{equation*}
  for $z=0$, $z=e_1$ and $z=-e_1$.

  We get
  \begin{align*}
    \tn x \pm e_1 \tn &= \|x \pm e_1 \|_Y
    = \max(\|P_Y(x \pm e_1)\|,\|x \pm e_1 - P_Y(x \pm e_1)\|) \\
    &= \max(\|P_Y(x) \pm e_1\|,\|x - P_Y(x)\|)
    \le \max(\|P_Y(x) \pm e_1\|,1)
  \end{align*}
  We know that $P_Y(x) \in B_{c_0}(e_1,1) \cup B_{c_0}(-e_1,1)$,
  so the above calculation shows that
  $x \in B(e_1,1) \cup B(-e_1,1)$.
\end{proof}

\end{document}